	\definecolor{Blue}{HTML}{3d25b9}
	\definecolor{Red}{HTML}{c00054} 
	\titlespacing{\section}{0pt}{12pt}{0pt}
	\titlespacing{\subsection}{0pt}{6pt}{0pt}
\setlist{topsep=0pt,itemsep=0pt}
\newcommand{\dd}{\mathrm{d}}
\newcommand{\bb}{\mathbf{b}}
\newcommand{\h}{\hbar}
\newcommand{\MM}{\overline{\mathcal{M}}}
\newcommand{\N}{\overline{N}}
\newcommand{\Res}{\mathop{\mathrm{Res}}}
\newcommand{\zz}{\mathbf{z}}
\DeclareMathOperator{\sgn}{sgn}
	\crefname{equation}{equation}{equations}
\theoremstyle{plain}
	\newtheorem{theorem}{Theorem}
	\newtheorem{proposition}[theorem]{Proposition}
	\newtheorem{corollary}[theorem]{Corollary}
	\newtheorem{lemma}[theorem]{Lemma}
	\newtheorem{conjecture}[theorem]{Conjecture}
	\numberwithin{theorem}{section}
\theoremstyle{definition}
	\newtheorem{definition}[theorem]{Definition}
	\newtheorem{example}[theorem]{Example}
\theoremstyle{remark}
	\newtheorem*{remark*}{Remark}
\title{Local topological recursion governs the enumeration of lattice points in $\overline{\mathcal M}_{g,n}$}
\author{Anupam Chaudhuri \and Norman Do \and Ellena Moskovsky}
\date{\today}
\begin{document}

\textbf{{\large Local topological recursion governs the enumeration of lattice points in $\overline{\mathcal M}_{g,n}$}}

\textbf{Anupam Chaudhuri, Norman Do and Ellena Moskovsky}

School of Mathematics, Monash University, VIC 3800 Australia \\
Email: \href{mailto:anupam.chaudhuri@monash.edu}{anupam.chaudhuri@monash.edu}, \href{mailto:norm.do@monash.edu}{norm.do@monash.edu}, \href{mailto:ellena.moskovsky@monash.edu}{ellena.moskovsky@monash.edu}

{\em Abstract.} The second author and Norbury initiated the enumeration of lattice points in the Deligne--Mumford compactifications of moduli spaces of curves. They showed that the enumeration may be expressed in terms of polynomials, whose top and bottom degree coefficients store psi-class intersection numbers and orbifold Euler characteristics of $\overline{\mathcal M}_{g,n}$, respectively. Furthermore, they ask whether the enumeration is governed by the topological recursion and whether the intermediate coefficients also store algebro-geometric information. In this paper, we prove that the enumeration does indeed satisfy the topological recursion, although with a modification to the initial spectral curve data. Thus, one can consider this to be one of the first known instances of a natural enumerative problem governed by the so-called {\em local topological recursion}. Combining the present work with the known relation between local topological recursion and cohomological field theory should uncover the geometric meaning of the intermediate coefficients of the aforementioned polynomials.

\emph{Acknowledgements.} The second author was supported by the Australian Research Council grant DP180103891. The authors would like to thank Danilo Lewanski for interesting discussions and Paul Norbury for valuable feedback on an early version of the paper.

\emph{2010 Mathematics Subject Classification.} 14N10 (primary), 05A15, 14N35, 30F30.

~

\hrule

~

\tableofcontents

~

\hrule

~

\section{Introduction} \label{sec:introduction}

Norbury proved that a certain count of lattice points in the moduli space of curves ${\mathcal M}_{g,n}$ stores information about its intersection theory and orbifold Euler characteristic~\cite{nor10}. He furthermore showed that the enumeration is governed by the topological recursion of Chekhov, Eynard and Orantin~\cite{che-eyn06,eyn-ora07,nor13}. More recently, Andersen, Chekhov, Norbury and Penner use the general theory that identifies topological recursion with the Givental formalism to relate this enumeration to cohomological field theory~\cite{and-che-nor-pen15a,and-che-nor-pen15b,dun-ora-sha-spi14}.

The second author and Norbury introduced the related count of lattice points in $\overline{\mathcal M}_{g,n}$, the Deligne--Mumford compactification of the moduli space of curves~\cite{do-nor11}. For positive integers $b_1, b_2, \ldots, b_n$, they define
\[
\overline{\mathcal Z}_{g,n}(b_1, b_2, \ldots, b_n) \subset \overline{\mathcal M}_{g,n}
\]
to be the set of genus $g$ stable curves $\Sigma$ with marked points $(p_1, p_2, \ldots, p_n)$ such that there exists a morphism $f: \Sigma \to \mathbb{CP}^{1}$ satisfying the following conditions.
\begin{enumerate}[label=(C\arabic*)]
\item The morphism $f$ has degree $b_1 + b_2 + \cdots + b_n$ and is regular over $\mathbb{P}^1 \setminus \{0, 1, \infty\}$.
\item The ramification profile over $1 \in \mathbb{CP}^1$ is of the form $(2, 2, \ldots, 2)$ and the ramification profile over $\infty \in \mathbb{CP}^1$ is of the form $(b_1, b_2, \ldots, b_n)$, with ramification order $b_k$ occurring at the point $p_k \in \Sigma$.
\item Each point over $0 \in \mathbb{CP}^{1}$ has ramification order at least 2 or is a node of $\Sigma$.
\end{enumerate}

The set $\overline{\mathcal Z}_{g,n}(b_1, b_2, \ldots, b_n)$ is typically the union of a finite set of discrete points in $\overline{\mathcal M}_{g,n}$ with higher-dimensional components that are naturally products of moduli spaces of curves. The latter arise from maps $f: \Sigma \to \mathbb{CP}^{1}$ that have {\em ghost components} --- that is, irreducible components of $\Sigma$ that map entirely to $0 \in \mathbb{CP}^1$. To properly ``count'' points in $\overline{\mathcal Z}_{g,n}(b_1, b_2, \ldots, b_n)$, one needs to account for both the orbifold nature of $\overline{\mathcal M}_{g,n}$ and the existence of these ghost components. This can be conveniently expressed via the orbifold Euler characteristic as follows.

\begin{definition} \label{def:Ngn}
For positive integers $b_1, b_2, \ldots, b_n$, define 
\[
\N_{g,n}(b_1, b_2, \ldots, b_n) = \chi \left(\overline{\mathcal Z}_{g,n}(b_1, b_2, \ldots, b_n) \right).
\]
\end{definition}

The enumeration $\N_{g,n}$ enjoys the following properties, which can be found in the existing literature~\cite{do-nor11} and are explained in greater detail in~\cref{subsec:Ngn}. 
\begin{itemize}
\item {\em Quasi-polynomiality.} For $(g,n) \neq (0,1)$ or $(0,2)$, $\N_{g,n}(b_1, b_2, \ldots, b_n)$ is a symmetric quasi-polynomial in $b_1^2, b_2^2, \ldots, b_n^2$ of degree $\dim_\mathbb{C} \overline{\mathcal M}_{g,n} = 3g-3+n$. We use the term {\em quasi-polynomial} to refer to a function on $\mathbb{Z}_+^n$ that is polynomial on each fixed parity class. Observe that this allows us to extend $\N_{g,n}(b_1, b_2, \ldots, b_n)$ to evaluation at $b_i = 0$.
\item {\em Combinatorial recursion.} The enumeration $\N_{g,n}(b_1, b_2, \ldots, b_n)$ can be interpreted as a weighted count of combinatorial objects known as {\em stable ribbon graphs}. From this interpretation, one can deduce an effective recursion to calculate $\N_{g,n}(b_1, b_2, \ldots, b_n)$.
\item {\em Psi-class intersection numbers.} The top degree coefficients of the quasi-polynomial $\N_{g,n}$ store psi-class intersection numbers on $\overline{\mathcal M}_{g,n}$.
\item {\em Orbifold Euler characteristics.} The quasi-polynomial $\N_{g,n}$ satisfies $\N_{g,n}(0, 0, \ldots, 0) = \chi(\overline{\mathcal M}_{g,n})$.
\end{itemize}

We previously mentioned that the enumeration of lattice points in the uncompactified moduli space of curves ${\mathcal M}_{g,n}$ is governed by the topological recursion and consequently, related to cohomological field theory. It is certainly natural to seek analogous results in the context of the compactified enumeration $\N_{g,n}$. In this regard, the second author and Norbury originally state the following.
\begin{enumerate}[label=(\alph*)]
\item {\itshape ``It would be interesting to know whether the compactified lattice point polynomials can be used to define multidifferentials which also satisfy a topological recursion.''} \cite[p.~2343]{do-nor11}
\item {\itshape ``We remark that it is currently unknown whether or not the intermediate coefficients of $\N_{g,n}(\bb)$ store topological information about $\MM_{g,n}$.''} \cite[p.~2323]{do-nor11}
\end{enumerate}

In this paper, we settle problem (a) above by proving that the enumeration $\N_{g,n}$ is indeed governed by the topological recursion, although with a modification to the initial spectral curve data that is explained below. Although problem (b) above remains unresolved, our main theorem should allow one to invoke the general theory that identifies topological recursion with the Givental formalism to yield a connection to cohomological field theory~\cite{dun-ora-sha-spi14}. This would then provide a relation between the intermediate coefficients of $\N_{g,n}(b_1, b_2, \ldots, b_n)$ and the intersection theory of $\MM_{g,n}$. We aim to report on this work in the future.

The main result of the present work is the following.

\begin{theorem} \label{thm:TR}
Topological recursion applied to the {\bf local spectral curve} $\mathbb{C}^*$ equipped with the data
\begin{equation} \label{eq:spectralcurve}
x(z) = z + \frac{1}{z}, \qquad y(z) = z \qquad \text{and} \qquad \omega_{0,2}(z_1, z_2) = \frac{\dd z_1 \otimes \dd z_2}{(z_1 - z_2)^2} + \frac{\dd z_1 \otimes \dd z_2}{z_1 z_2}
\end{equation}
produces multidifferentials whose expansions at $z_i = 0$ satisfy
\[
\omega_{g,n}(z_1, z_2, \ldots, z_n) = \sum_{b_1, b_2, \ldots, b_n = 0}^{\infty} \N_{g,n}(b_1, b_2, \ldots, b_n) \, \prod_{i=1}^n [b_i] z_i^{b_i - 1} \, \dd z_i, \qquad \text{for $(g,n) \neq (0,1)$ or $(0,2)$.}
\]
Here, we use the notation $[b] = b$ for $b$ positive and $[0] = 1$. 
\end{theorem}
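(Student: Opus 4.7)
The plan is to follow the standard two-step strategy for establishing a topological recursion identity. First, derive the combinatorial recursion for $\N_{g,n}$ from the stable ribbon graph interpretation recalled in \S\ref{subsec:Ngn}, and transform it via the discrete Laplace-type transform
\[
\omega_{g,n}(z_1, \ldots, z_n) \;\leftrightarrow\; [b_1]\cdots[b_n]\, \N_{g,n}(b_1, \ldots, b_n)
\]
reading off coefficients of $\prod_i z_i^{b_i-1}\,\dd z_i$. Second, expand the TR residue formula for the local spectral curve \eqref{eq:spectralcurve} and match it to the transformed combinatorial recursion, term by term. An induction on $2g-2+n$, with base cases $(g,n) \in \{(0,3),(1,1)\}$ verified by direct calculation, then yields the theorem.

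The TR ingredients are straightforward to compute. The ramification points of $x(z) = z + 1/z$ are $z = \pm 1$, with local Galois involution $\sigma(z) = 1/z$. One has $y(z) - y(\sigma(z)) = z - 1/z$ and $\dd x = (1-z^{-2})\,\dd z$, giving the recursion kernel
\[
K(z_0, z) = \frac{\tfrac{1}{2} \int_{1/z}^{z} \omega_{0,2}(z_0, \cdot)}{(z - 1/z)(1 - z^{-2})\,\dd z}.
\]
The two summands of $\omega_{0,2}$ contribute independently: the Bergman piece $\dd z_1\,\dd z_2/(z_1-z_2)^2$ integrates against the numerator of TR to reproduce the ``usual'' part of the recursion already familiar from Norbury's work~\cite{nor13} on $\mathcal{M}_{g,n}$, while the added local piece $\dd z_1\,\dd z_2/(z_1 z_2)$ contributes an extra term whose expansion at $z_0 = 0$ generates simple poles, supplying precisely the $b_i = 0$ coefficients of the expansion with weight $[0] = 1$. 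Converting residues at $z = \pm 1$ into coefficients of $z_0^{b_0-1}\,\dd z_0$ by contour deformation should then reproduce every term of the combinatorial recursion.

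The main obstacle, I anticipate, is matching the contribution of the added summand $\dd z_1\,\dd z_2/(z_1 z_2)$ in $\omega_{0,2}$ to the ghost-component terms in the combinatorial recursion, and thereby verifying that $\omega_{g,n}$ has the correct principal parts at $z_i = 0$ --- in particular that the constant coefficient at $\bb = 0$ is $\chi(\MM_{g,n})$. In Norbury's open-case proof the Bergman kernel alone suffices and $\omega_{g,n}$ has poles only at the ramification points $\pm 1$; here, $\omega_{g,n}$ must additionally develop simple poles at $z_i = 0$, and these are injected recursively through the local perturbation. A secondary subtlety is that the involution $z \mapsto 1/z$ plays a dual role as both the local Galois action at $\pm 1$ and the exchange between the expansions at $0$ and $\infty$; carefully managing this symmetry, together with comparing the base cases against the known values of $\N_{0,3}$ and $\N_{1,1}$, should complete the inductive step.
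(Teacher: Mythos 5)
Your overall strategy coincides with the paper's: transform the combinatorial recursion for $\N_{g,n}$ into an identity for generating differentials, symmetrise/antisymmetrise under $z \mapsto 1/z$, recover the differential from its principal parts at $z = \pm 1$, and match against the TR residue formula by induction on $2g-2+n$ with base cases $(0,3)$ and $(1,1)$. However, the one point you explicitly flag as ``the main obstacle'' is precisely the step you do not close, and it is the genuinely new content of this theorem relative to Norbury's open-surface case. Concretely: the piece $\frac{\dd z_1 \otimes \dd z_2}{z_1 z_2}$ of $\omega_{0,2}$ integrates to a \emph{logarithm} in the kernel, $K(z_1,z) = -\bigl[\frac{1}{z_1-z} + \frac{\log(z)}{z_1}\bigr]\frac{z^3}{(1-z^2)^2}\frac{\dd z_1}{\dd z}$, and the residues in TR are still taken only at $z = \pm 1$. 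You assert that this term ``suppl[ies] precisely the $b_i=0$ coefficients,'' but that requires an identity equating a sum of residues at $\pm 1$ weighted by $\log(z)$ with a residue at $0$. The paper isolates this as \cref{lem:resatzero}: for every $\Omega(z)$ in the space $V(z)$ spanned by the generating differentials, $\sum_{\alpha=\pm1}\Res_{z=\alpha}\Omega(z)\log(z) = \Res_{z=0}\Omega(z)$, proved basis element by basis element. Without this lemma (or an equivalent), the matching cannot be completed: after subtracting the principal part $\frac{\dd z_1}{z_1}\Res_{p=0}\Omega_{g,n}(p,\zz_S)$ so that the remaining differential has poles only at $\pm 1$, one must show that exactly this subtracted term is regenerated by the $\log(z)/z_1$ part of the kernel acting on the right side of the recursion.

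Two smaller corrections. First, the input you need from the structure of $\N_{g,n}$ is not the Euler-characteristic evaluation $\N_{g,n}(0,\ldots,0) = \chi(\MM_{g,n})$ --- that is a known property and plays no role in the inductive step --- but rather quasi-polynomiality in the $b_i^2$, which yields $\Omega_{g,n} \in V(z_1)\otimes\cdots\otimes V(z_n)$ and hence both the pole structure (simple poles at $0$, higher-order poles only at $\pm 1$) and the antisymmetry $\Omega_{g,n}(z_1,\ldots) + \Omega_{g,n}(\tfrac{1}{z_1},\ldots) = 0$ (\cref{prop:vectorspace}); the latter is what lets you kill all terms of the symmetric combinatorial recursion except those singled out by $z_1$. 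Second, the Bergman piece does not simply ``reproduce Norbury's recursion'': the combinatorial recursion here (\cref{thm:propNgn}) already differs from the open case in that all summation indices run from $0$ with the weight $[0]=1$, so the two pieces of $\omega_{0,2}$ cannot be matched to the two recursions independently; they must be handled together through the symmetrisation and the residue identity above.
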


The most notable aspect of the theorem is the nature of the spectral curve involved, which can be considered local rather than global, in the sense discussed below.

\begin{itemize}
\item {\em Global topological recursion.}\footnote{We use the expression {\em global topological recursion} to contrast it with its local counterpart. However, this is not to be confused with the global version of topological recursion introduced by Bouchard and Eynard~\cite{bou-eyn13}.} In the foundational literature on topological recursion, a spectral curve is defined to be the data $({\mathcal C}, x, y, T)$, where ${\mathcal C}$ is a compact Riemann surface, $x$ and $y$ are meromorphic functions on ${\mathcal C}$, and $T$ is a Torelli marking on ${\mathcal C}$ --- that is, a choice of symplectic basis for $H_1({\mathcal C}; \mathbb{Z})$~\cite{eyn-ora07,eyn-ora09}. One usually also imposes some mild regularity conditions on this data, although they play no role in the present discussion. The global topological recursion then recursively produces so-called correlation differentials $\omega_{g,n}$ for integers $g \geq 0$ and $n \geq 1$. In particular, $\omega_{0,2}(z_1, z_2)$ is defined implicitly by the fact that it has double poles without residue along the diagonal $z_1=z_2$, is holomorphic away from the diagonal, and is normalised on the $\mathcal{A}$-cycles of the Torelli marking via the equation
\[
\oint_{\mathcal{A}_i} \omega_{0,2}(z_1, z_2) = 0, \qquad \text{ for } i = 1, 2, \ldots, \text{genus}({\mathcal C}).
\]
The compact nature of $\mathcal C$ ensures that $\omega_{0,2}$ is uniquely defined from the spectral curve data. A consequence of the global topological recursion is that for $(g,n) \neq (0,1)$ or $(0,2)$, the correlation differentials $\omega_{g,n}$ have poles only at the branch points of the spectral curve, where $\dd x$ vanishes.

\item {\em Local topological recursion.} One can observe that the global topological recursion actually only requires the local information of the meromorphic functions $x, y$ and the bidifferential $\omega_{0,2}$ at the branch points of the spectral curve, in order to produce the correlation differentials. Thus, one can more generally define topological recursion on spectral curves comprising isolated local germs of $x, y$ and $\omega_{0,2}$, without requiring the existence of a global compact Riemann surface on which this data can be defined. In particular, the local topological recursion requires $\omega_{0,2}$ to become part of the spectral curve data. This viewpoint was promoted by Dunin-Barkowksi, Orantin, Shadrin and Spitz in their work relating topological recursion to Givental's approach to cohomological field theory~\cite{dun-ora-sha-spi14}.
\end{itemize}

The spectral curve of~\cref{thm:TR} is local in the sense that the data cannot be extended to $\mathbb{CP}^1$ such that $\omega_{0,2}$ satisfies the conditions of the global topological recursion. The simple poles of $\omega_{0,2}$ at $z_1=0$ and $z_2=0$ lead to the correlation differentials $\omega_{g,n}(z_1, z_2, \ldots, z_n)$ having simple poles at $z_i=0$ more generally. This departs from the usual behaviour exhibited by the global topological recursion, in which the poles appear only at the branch points of the spectral curve, which correspond to $z_i = \pm 1$ in our case.

It was previously unclear whether there were benefits to using the local version of the topological recursion beyond the more general viewpoint it afforded. Indeed, Dunin-Barkowski~\cite{dun18} states that {\em ``local topological recursion (to the moment) lacks interesting applications or profound meaning separate from what originates from ordinary (global) spectral curve topological recursion''}. \cref{thm:TR} above now provides an instance of the local topological recursion applied to a natural enumerative problem. The only other such example in the literature known to us is the concurrent work of Andersen et al., which relates Masur--Veech volumes to local topological recursion on the Airy spectral curve, equipped with an interesting choice of $\omega_{0,2}$~\cite{and-bor-cha-del-gia-lew-whe19}.

The proof of~\cref{thm:TR} adopts a general strategy that has been previously employed to show that topological recursion governs enumerative problems, such as counting lattice points in ${\mathcal M}_{g,n}$~\cite{nor13} and several variants of Hurwitz numbers~\cite{bou-her-liu-mul14,do-dye-mat17,do-lei-nor16,eyn-mul-saf11}. Minor technical difficulties arise from the modification to $\omega_{0,2}$, which introduces logarithmic terms into the topological recursion kernel.

The general theory of topological recursion allows one to calculate so-called symplectic invariants $F_g \in \mathbb{C}$ for $g = 0, 1, 2, \ldots$ and to deduce relations known as string and dilaton equations. Thus, we have the following immediate consequence of our main result, which previously appeared in the literature with an alternative proof~\cite{do-nor11}.

\begin{corollary} \label{cor:stringdilaton} ~
The string and dilaton equations for the topological recursion imply the following known relations, respectively, for $(g,n) \neq (0,1)$ or $(0,2)$ and $b_1, b_2, \ldots, b_n \geq 0$. The hat over $b_k$ in the first equation denotes the fact that we omit it as an argument.
\[
\N_{g,n+1}(1, b_1, b_2, \ldots, b_n) = \sum_{k=1}^{n} \sum_{a=0}^{b_k-1} [a] \, \N_{g,n}(a, b_1, \ldots, \widehat{b}_k, \ldots, b_n)
\]
\[
\N_{g,n+1}(2, b_1, b_2, \ldots, b_n) - \N_{g,n+1}(0, b_1, b_2, \ldots, b_n) = (2g - 2 + n) \, \N_{g,n}(b_1, b_2, \ldots, b_n)
\]
\end{corollary}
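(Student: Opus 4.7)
The plan is to obtain both identities by specialising the general string and dilaton equations of topological recursion to the local spectral curve of \cref{thm:TR}, and then to translate the resulting multidifferential identities into combinatorial statements about $\N_{g,n}$ by reading off coefficients via the expansion formula.

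The string and dilaton equations of topological recursion can be written as residue identities at the branch points of the spectral curve, taking the schematic form
\[
\sum_{\alpha \in \{\pm 1\}} \Res_{z=\alpha} \phi_1(z) \, \omega_{g,n+1}(z, z_1, \ldots, z_n) = \text{(string right-hand side)},
\]
\[
\sum_{\alpha \in \{\pm 1\}} \Res_{z=\alpha} \phi_2(z) \, \omega_{g,n+1}(z, z_1, \ldots, z_n) = (2g-2+n) \, \omega_{g,n}(z_1, \ldots, z_n),
\]
where $\phi_1$ is a primitive of $y \, dx$ and $\phi_2$ is constructed from $x$ and $y$ in the standard way. For our spectral curve, $y \, dx = (z - 1/z)\, dz$, so $\phi_1$ contains a logarithmic term $-\log z$. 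These equations continue to hold in the local setting, since their derivations invoke only data local to the branch points.

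Next I would deform the contour to move the residues from $z = \pm 1$ to the remaining singularities of the integrand at $z = 0$ and $z = \infty$. At $z = 0$, \cref{thm:TR} supplies an explicit Laurent expansion of $\omega_{g,n+1}$, so the resulting residues evaluate as explicit linear combinations of the numbers $\N_{g,n+1}(b_0, b_1, \ldots, b_n)$ for small $b_0$; specifically $b_0 = 1$ for the string equation, and $b_0 = 0, 2$ for the dilaton equation. The residue at $z = \infty$ can be analysed via the change of variables $w = 1/z$, using the symmetry $x(z) = x(1/z)$. Extracting the coefficient of $\prod_i [b_i] z_i^{b_i-1} \, dz_i$ on both sides then yields the claimed identities; the partial sum $\sum_{a=0}^{b_k-1} [a] \, \N_{g,n}(a, \ldots)$ on the right of the string equation should emerge naturally from an antiderivative step that converts a residue in $z_k$ into a telescoping sum.

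The main technical obstacle is the careful handling of the logarithmic terms in $\phi_1$ (and similarly in $\phi_2$), a phenomenon specific to the local topological recursion that is absent in the usual global setting. These can be either handled via an explicit choice of branch cut, or sidestepped by exploiting the $z \mapsto 1/z$ symmetry of the spectral curve, under which one checks directly that $x(z)$ and $\omega_{0,2}(z_1, z_2)$ are both invariant. With the logs under control, the derivation then proceeds as a direct specialisation of the standard arguments.
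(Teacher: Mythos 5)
Your overall strategy coincides with the paper's: specialise the general string and dilaton residue identities at the branch points to this spectral curve, push the residues from $z=\pm1$ to $z=0$ and $z=\infty$, use the behaviour of the correlation differentials under $z\mapsto 1/z$ to fold the residue at infinity back onto $z=0$, and then read off the coefficient of $\prod_i [b_i]z_i^{b_i-1}\,\dd z_i$ using the expansion of \cref{thm:TR}. The geometric-series expansion of $1/\dd x(z_k)$ is indeed what produces the partial sum $\sum_{a=0}^{b_k-1}[a]\,\N_{g,n}(a,\ldots)$ on the string side, much as you anticipate.

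However, your treatment of the logarithm --- which is the one genuinely non-routine point of this corollary in the local setting --- is not correct as stated. First, the logarithm sits in the wrong equation: the string equation weights $\omega_{g,n+1}$ by $y(z)=z$ itself, which is log-free, whereas it is the dilaton weight $\Phi(z)$ with $\dd\Phi=y\,\dd x=(z-\tfrac1z)\,\dd z$, i.e.\ $\Phi(z)=\tfrac12 z^2-\log z$, that carries the term $-\log z$. Second, your proposed mechanisms for taming it do not go through: $\omega_{0,2}$ is \emph{not} invariant under $z\mapsto 1/z$ (its double-pole part transforms into $-\dd z_1\,\dd z_2/(1-z_1z_2)^2$), and the symmetry that the correlation differentials actually enjoy is the \emph{anti}-invariance $\Omega(z)+\Omega(1/z)=0$ of \cref{prop:vectorspace}, which does not by itself evaluate the residues of $\log(z)\,\omega_{g,n+1}$ at $z=\pm1$. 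What is needed is the identity
\[
\sum_{\alpha=\pm1}\Res_{z=\alpha}\Omega(z)\log(z)=\Res_{z=0}\Omega(z)
\]
for all $\Omega(z)\in V(z)$ (\cref{lem:resatzero}), proved by explicit computation on the basis elements $\xi_k^{\mathrm{even}},\xi_k^{\mathrm{odd}}$; it is precisely this that converts the logarithmic part of the dilaton residue into $-\Res_{z=0}\omega_{g,n+1}(z,\zz_S)$ and hence into the $\N_{g,n+1}(0,\bb_S)$ term on the left of the second relation. Without this lemma (or an equivalent), the dilaton computation does not close.
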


One potential application of the present work is to give an explicit relation between the enumeration $\N_{g,n}$ and the algebraic geometry of $\overline{\mathcal M}_{g,n}$. A priori, one might expect such a relation due to the definition of $\N_{g,n}(b_1, b_2, \ldots, b_n)$ as a virtual count of the set $\overline{\mathcal Z}_{g,n}(b_1, b_2, \ldots, b_n) \subset \overline{\mathcal M}_{g,n}$. Furthermore, we note that $\overline{\mathcal Z}_{g,n}(b_1, b_2, \ldots, b_n)$ may alternatively be interpreted as a subset of $\overline{\mathcal M}_{g,n}(\mathbb{CP}^1; \sum b_i)$, the moduli space of stable maps, making a connection with the Gromov--Witten theory of the sphere. \cref{thm:TR} now provides a promising pathway towards the ultimate proof of a relation between the enumeration $\N_{g,n}$ and the intersection theory of $\overline{\mathcal M}_{g,n}$ via the identification of topological recursion with Givental's formula~\cite{dun-ora-sha-spi14}.

A glance at~\cref{app:data} naturally leads to the conjecture below. The data supports the speculation that the coefficients of $\N_{g,n}$ store algebro-geometric content.

\begin{conjecture} \label{con:positivity}
The polynomials underlying the quasi-polynomial $\N_{g,n}$ have non-negative coefficients.
\end{conjecture}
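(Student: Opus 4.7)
The plan is to seek either a combinatorial or geometric interpretation that makes each polynomial coefficient of $\N_{g,n}$ manifestly a non-negative rational number. The first angle I would pursue is combinatorial: the enumeration $\N_{g,n}(b_1, \ldots, b_n)$ is a weighted count of stable ribbon graphs, with weights incorporating orbifold Euler characteristic contributions from ghost components. While the total count on positive integer inputs is controlled, the weights are sign-indefinite, so the challenge is to regroup the contributions by monomial in $b_1, b_2, \ldots, b_n$ so that each regrouped coefficient is manifestly a positive count of some refined ribbon-graph-like object. If such a bijective reorganisation exists, positivity will be immediate.

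The second angle is inductive, using \cref{thm:TR}. The topological recursion expresses $\omega_{g,n}$ as a residue formula involving $\omega_{g', n'}$ with $2g'+n' < 2g+n$, and after expanding at $z_i = 0$ this yields an explicit recursion for the coefficients of $\N_{g,n}$. I would try to show that the recursion preserves coefficient-wise non-negativity: assuming the polynomials underlying $\N_{g', n'}$ for all smaller $2g'+n'$ have non-negative coefficients, deduce the same for $\N_{g,n}$. Two technical complications arise: the logarithmic contribution coming from the simple-pole part of $\omega_{0,2}$ in the local spectral curve injects terms whose positivity must be checked separately; and the parity dependence of the quasi-polynomial forces the induction to be carried out parity class by parity class, with the different polynomials possibly requiring coordinated arguments.

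The third angle, aligned with the authors' own outlook, is to leverage the anticipated identification of $\N_{g,n}$ with an intersection number of the form $\int_{\MM_{g,n}} \Omega_{g,n} \cdot \prod_i P_i(\psi_i)$ for a cohomological field theory class $\Omega_{g,n}$ produced by Givental's formula applied to the local spectral curve of \cref{thm:TR}. Positivity would then reduce to showing that $\Omega_{g,n}$ expands with non-negative coefficients in a basis whose pairings against polynomials in psi classes are non-negative, for instance decorated boundary strata of $\MM_{g,n}$. The hard part in all three approaches is the same cancellation problem: each polynomial coefficient of $\N_{g,n}$ is a priori an alternating sum of geometric or combinatorial contributions, and isolating a canonical ``positive part'' that it counts is the main obstacle. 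Before attempting any general proof, I would extend the tables of \cref{app:data} using the recursion derived from \cref{thm:TR} to test the conjecture in further cases and to search for patterns in the monomial-by-monomial expansions that might suggest the correct underlying positive object.
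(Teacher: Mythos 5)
The statement you are addressing is stated in the paper as a conjecture, and the paper offers no proof of it: the only support given is the table of polynomials in \cref{app:data}. Your proposal likewise does not constitute a proof. It is a survey of three possible attack routes, and you yourself identify that all three terminate at the same unresolved point --- isolating a canonical positive object counted by each coefficient --- without resolving it for any of them. So there is a genuine gap: no argument is actually carried out.

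To be concrete about why each route, as stated, does not close. For the combinatorial angle, the weights on stable ribbon graphs involve factors of $\chi(\overline{\mathcal M}_{g',n'})$ for the ghost components, and these orbifold Euler characteristics alternate in sign as $g'$ and $n'$ vary; the enumeration is therefore genuinely a signed sum, and you exhibit no regrouping that kills the cancellation. For the inductive angle via \cref{thm:TR}, the recursions available in the paper are visibly not positivity-preserving coefficient by coefficient: the asymmetric recursion contains a term weighted by $\sgn(b_1 - b_j)$, the dilaton-type relation of \cref{cor:stringdilaton} involves the difference $\N_{g,n+1}(2,\mathbf{b}) - \N_{g,n+1}(0,\mathbf{b})$, and the recursion kernel itself carries signs; so the inductive step you describe is not a routine verification but is exactly where the difficulty lives, and you do not address it. For the CohFT angle, the identification of $\N_{g,n}$ with intersection numbers against a Givental-type class is explicitly flagged in the paper as future work, so you would be conditioning a proof on an unestablished result, and even granted it, positivity of the resulting class in a suitable basis is a further unproved claim. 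Your closing suggestion --- extend the tables and look for the positive object --- is a sensible research plan, but it is a plan, not a proof, and the conjecture remains open both in the paper and in your proposal.
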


The structure of the paper is as follows.
\begin{itemize}
\item In Section 2, we briefly review the relevant definitions and results concerning the count of lattice points in $\overline{\mathcal M}_{g,n}$ appearing in the previous work of the second author and Norbury~\cite{do-nor11}. In particular, we aim to reveal the sense in which $\N_{g,n}(b_1, b_2, \ldots, b_n)$ counts lattice points in $\overline{\mathcal M}_{g,n}$ and explain why the enumeration is natural. We furthermore provide a definition and concise exposition of the topological recursion, for both the global and local settings.

\item In Section 3, we give the proof of~\cref{thm:TR}. This requires some preliminary work to express the combinatorial recursion for $\N_{g,n}$ in terms of natural generating functions and to deduce some analytic structure for them. Finally, we match the combinatorial recursion with the topological recursion, adopting a general strategy that has previously been employed in the literature to prove that certain enumerative problems satisfy the topological recursion.

\item In Section 4, we discuss some further issues stemming from our work. We prove the consequences stated above as~\cref{cor:stringdilaton} and mention the potential relation to cohomological field theory. We conclude with some brief remarks on quantum curves and the question of where the local spectral curve came from. An answer to this may help to identify other interesting instances and applications of the local topological recursion.

\item In Appendix A, we present a table of the polynomials underlying the enumeration $\N_{g,n}$ for small values of $g$ and $n$.
\end{itemize}

\section{Background} \label{sec:background}

\subsection{Counting lattice points in \texorpdfstring{$\overline{\mathcal M}_{g,n}$}{M{g,n}}} \label{subsec:Ngn}

In this section, we discuss the enumeration given in \cref{def:Ngn}. Our exposition aims to reveal the sense in which $\N_{g,n}(b_1, b_2, \ldots, b_n)$ counts lattice points in $\overline{\mathcal M}_{g,n}$ and explain why the enumeration is natural. We begin by recalling a definition for a ribbon graph that is well-suited to our purposes.

\begin{definition}
A {\em ribbon graph} is a finite graph embedded into an oriented compact surface such that its complement is a disjoint union of topological disks, which we call {\em faces}. We say that a ribbon graph has {\em type $(g,n)$} if its underlying surface is connected with genus $g$ and there are $n$ faces labelled from 1 up to~$n$. {\em Unless otherwise stated, we exclusively consider ribbon graphs in which every vertex has degree at least two.}

An {\em isomorphism} between two ribbon graphs comprises bijections between their respective vertices, oriented edges and faces that are realised by an orientation-preserving homeomorphism between their underlying surfaces and preserve all adjacencies and face labels.
\end{definition}

A construction of Strebel associates to a smooth genus $g$ curve with $n$ marked points, each decorated by a positive real number, a metric ribbon graph of type $(g,n)$~\cite{str}. A {\em metric ribbon graph} is a ribbon graph whose vertex degrees are at least three, with a positive real number associated to each edge. The metric structure naturally endows the faces with perimeters that are equal to the original decorations on the marked points. The construction allows one to form the orbifold cell decompositions
\[
{\mathcal M}_{g,n} \times \mathbb{R}_+^n \cong \bigsqcup_\Gamma {\mathcal P}_\Gamma \qquad \text{and} \qquad {\mathcal M}_{g,n} \cong \bigsqcup_{\Gamma} {\mathcal P}_\Gamma(b_1, b_2, \ldots, b_n),
\]
where the unions are over the finite set of ribbon graphs of type $(g,n)$ whose vertex degrees are at least three. The cell ${\mathcal P}_\Gamma$ consists of the metric ribbon graphs whose underlying ribbon graph is equal to $\Gamma$. The latter decomposition is obtained from the former by prescribing the face perimeters $b_1, b_2, \ldots, b_n \in \mathbb{R}_+$.

These cell decompositions are fundamental in Kontsevich's proof of Witten's conjecture, which proceeds by calculation of the volume of the moduli space with respect to a particular symplectic structure~\cite{kon92}. This is closely related to Mirzakhani's calculation of the Weil--Petersson volumes of moduli spaces of hyperbolic surfaces and indeed, arises as a particular limit of it~\cite{do10, mir07}. Norbury proposed to discretise the volume calculation, by restricting to positive integer values of $b_1, b_2, \ldots, b_n$ and counting lattice points in the resulting integral polytopes ${\mathcal P}_\Gamma(b_1, b_2, \ldots, b_n)$. These correspond to metric ribbon graphs with vertex degrees at least three and integral edge lengths or equivalently, ribbon graphs with vertex degrees at least two. Thus, we have the notion of lattice points in ${\mathcal M}_{g,n}$ and the associated enumeration possesses a variety of interesting properties~\cite{and-che-nor-pen15a, and-che-nor-pen15b, nor10, nor13}.

Ribbon graphs appear in diverse mathematical settings and lie in natural bijection with several other combinatorial and geometric objects~\cite{lan-zvo04}. For our purposes, it is advantageous to associate to a ribbon graph of type $(g,n)$ with perimeters $b_1, b_2, \ldots, b_n$ a branched cover $f: \Sigma \to \mathbb{CP}^1$ from a genus $g$ smooth curve with $n$ marked points $(p_1, p_2, \ldots, p_n)$ satisfying the following conditions.
\begin{enumerate}[label=(C\arabic*)]
\item The morphism $f$ has degree $b_1 + b_2 + \cdots + b_n$ and is regular over $\mathbb{P}^1 \setminus \{0, 1, \infty\}$.
\item The ramification profile over $1 \in \mathbb{CP}^1$ is of the form $(2, 2, \ldots, 2)$ and the ramification profile over $\infty \in \mathbb{CP}^1$ is of the form $(b_1, b_2, \ldots, b_n)$, with ramification order $b_k$ occurring at the point $p_k \in \Sigma$.
\item[($\widehat{\mathrm C}3$)] Each point over $0 \in \mathbb{CP}^{1}$ has ramification order at least 2.
\end{enumerate}
The construction is such that the ribbon graph is recovered by taking $f^{-1}([0,1]) \subset \Sigma$. The preimages of $0, 1, \infty$ then naturally correspond to vertices, edges and faces of the ribbon graph, respectively. The labelling of the marked points gives rise to a labelling of the faces of the ribbon graph. It is common to describe a ribbon graph via the permutation model~\cite{lan-zvo04}, and the branched cover encodes the associated triple of permutations via its monodromy around 0, 1 and $\infty$.

This perspective allows one to interpret the enumeration of ribbon graphs as an enumeration of maps to~$\mathbb{CP}^1$, thereby making thematic contact with Gromov--Witten theory. However, we observe that stable curves and stable maps do not feature in this count. The previous work of the second author and Norbury aims to ``compactify'' the count, with the aim of introducing an enumeration that is more natural from the Gromov--Witten viewpoint~\cite{do-nor11}. The basic premise is to consider morphisms $f: \Sigma \to \mathbb{CP}^1$ with domains that are stable curves and to consequently adjust condition ($\widehat{\mathrm C}3$) above to the following natural generalisation, as mentioned in~\cref{sec:introduction}.
\begin{enumerate}
\item[(C3)] Each point over $0 \in \mathbb{CP}^{1}$ has ramification order at least 2 or is a node of $\Sigma$.
\end{enumerate}

Again, one can consider the preimage of the interval $[0,1]$ under a morphism satisfying conditions (C1), (C2) and (C3). The domain curve may be nodal and each component that maps with positive degree to $\mathbb{CP}^1$ receives the structure of a ribbon graph. On the other hand, there may be components that map with degree zero to $0 \in \mathbb{CP}^1$ and we refer to these as {\em ghost components}. The structure of such a preimage is encapsulated by the combinatorial notion of a {\em stable ribbon graph}, which is explained in the previous work of the second author and Norbury~\cite{do-nor11}. Forgoing the details, one can interpret $\N_{g,n}(b_1, b_2, \ldots, b_n)$ as a weighted enumeration of stable ribbon graphs of type $(g,n)$ with face perimeters prescribed by $b_1, b_2, \ldots, b_n$. The weight of a stable ribbon graph is the reciprocal of its number of automorphisms multiplied by a contribution of $\chi(\overline{\mathcal M}_{g',n'})$ for each maximal ghost component of genus $g'$ that is connected to non-ghost components by $n'$ nodes. The notion of stable ribbon graph is the primary tool used to prove the combinatorial recursion for $\N_{g,n}(b_1, b_2, \ldots, b_n)$ stated below.

We record the following properties of the enumeration $\N_{g,n}(b_1, b_2, \ldots, b_n)$, the first two of which are crucial in the proof of our main result in~\cref{sec:proof}.

\begin{theorem}[Do and Norbury~\cite{do-nor11}] \label{thm:propNgn} ~
\begin{enumerate}
\item {\em Quasi-polynomiality.} For $(g,n) \neq (0,1)$ or $(0,2)$, $\N_{g,n}(b_1, b_2, \ldots, b_n)$ is a symmetric quasi-polynomial in $b_1^2, b_2^2, \ldots, b_n^2$ of degree $\dim_\mathbb{C} \overline{\mathcal M}_{g,n} = 3g-3+n$. We use the term {\em quasi-polynomial} to refer to a function on $\mathbb{Z}_+^n$ that is polynomial on each fixed parity class. Observe that $\N_{g,n}(b_1, b_2, \ldots, b_n) = 0$ whenever $b_1 + b_2 + \cdots + b_n$ is odd and that quasi-polynomiality allows us to extend $\N_{g,n}(b_1, b_2, \ldots, b_n)$ to evaluation at $b_i = 0$.

\item {\em Combinatorial recursion.} For $2g-2+n \geq 2$ and $b_1, b_2, \ldots, b_n \geq 0$, we have the following equation, where $S = \{1, 2, \ldots, n\}$ and $\bb_I = (b_{i_1}, b_{i_2}, \ldots, b_{i_k})$ for $I = \{i_1, i_2, \ldots, i_k\}$.
\begin{align*}
&\left(\sum_{i=1}^{n} b_i\right) \N_{g,n}(\bb _S) = \sum_{i < j} \sum_{\substack{p+q= b_i+b_j \\ q \text{ even}}} [p]q \, \N_{g,n-1}(p, \bb_{S\setminus \{i,j\}}) \nonumber \\
& \qquad + \frac{1}{2} \sum_i \sum_{\substack{p+q+r=b_i \\ r \text{ even}}} [p] [q] r \Bigg[ \N_{g-1,n+1}(p,q, \bb_{S \setminus \{i\}}) + \sum_{\substack{g_1+g_2=g \\ I \sqcup J = S \setminus \{i\}}}^{\mathrm{stable}} \N_{g_1,|I|+1}(p,\bb_I) \, \N_{g_2,|J|+1}(q,\bb_J) \Bigg]
\end{align*}
In the summations, $p, q, r$ vary over all non-negative integers and we use the notation $[p] = p$ for $p$ positive and $[0] = 1$. The word $\mathrm{stable}$ over the final summation denotes that we exclude all terms with $\N_{0,1}$ or $\N_{0,2}$.

\item {\em Psi-class intersection numbers.} For non-negative integers $\alpha_1 + \alpha_2 + \cdots + \alpha_n = 3g - 3 + n$, the coefficient of $b_1^{2\alpha_1}b_2^{2\alpha_2} \cdots b_n^{2\alpha_n}$ in any non-zero polynomial underlying $\N_{g,n}(b_1, b_2, \ldots, b_n)$ is equal to the psi-class intersection number
\[
\frac{1}{2^{5g-6+2n} \, \alpha_1! \, \alpha_2! \, \cdots \, \alpha_n!} \int_{\overline{\mathcal{M}}_{g,n}} \psi_1^{\alpha_1} \psi_2^{\alpha_2} \cdots \psi_n^{\alpha_n}.
\]

\item {\em Orbifold Euler characteristics.} Let $\chi_{g,n}$ denote the orbifold Euler characteristic of $\overline{\mathcal M}_{g,n}$. The quasi-polynomial $\N_{g,n}(b_1, b_2, \ldots, b_n)$ satisfies $\N_{g,n}(0,0,\ldots, 0) = \chi_{g,n}$. Furthermore, these numbers obey the following relation for $2g-2+n \geq 1$, with the convention $\chi_{0,1} = 0$ and $\chi_{0,2} = 1$.
\[
\chi_{g,n+1} = (2 - 2g - n) \, \chi_{g,n} + \frac{1}{2} \, \chi_{g-1,n+2} + \frac{1}{2} \sum_{\substack{g_1+g_2=g \\ i+j=n}} \binom{n}{i} \chi_{g_1, i+1} \, \chi_{g_2,j+1}
\] 
\end{enumerate}
\end{theorem}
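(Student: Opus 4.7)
The plan is to leverage the interpretation of $\N_{g,n}(b_1,\ldots,b_n)$ as a weighted enumeration of stable ribbon graphs of type $(g,n)$ with prescribed face perimeters, as outlined just above the theorem statement. All four items cascade from this combinatorial realisation, in the spirit of Norbury's approach in the uncompactified setting~\cite{nor10}.

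I would first establish part (2), the combinatorial recursion, since it underpins parts (1) and (4). The standard edge-removal argument applies: within a stable ribbon graph, distinguish the first edge encountered while traversing the boundary of some face in a canonical orientation, which accounts for the factor $\sum_i b_i$ on the left-hand side. Removing this distinguished edge yields either a stable ribbon graph with one fewer face (first sum, when the edge separates two distinct faces) or with one additional face (second sum, when both sides of the edge belong to the same face). The latter case further splits into connected and disconnected subcases, producing the $\N_{g-1,n+1}$ term and the convolution sum respectively. Ghost components ride along inertly: since edges only live on non-ghost components, the ghost weights $\chi(\overline{\mathcal M}_{g',n'})$ redistribute into the weighting of the smaller stable ribbon graphs without additional bookkeeping. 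The exclusion of $\N_{0,1}$ and $\N_{0,2}$ on the right-hand side is forced by their instability as ribbon graph components.

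Part (1) then follows by induction on $2g-2+n$ using the recursion of part (2), after hand-computing the base cases $(g,n) = (0,3)$ and $(1,1)$. The inductive step requires showing that the right-hand side, divided by $\sum b_i$, yields a symmetric quasi-polynomial in $b_1^2,\ldots,b_n^2$: the inner finite sums evaluate to quasi-polynomials by Faulhaber-style identities combined with the inductive hypothesis, and divisibility by $\sum b_i$ reflects the absence of a degree-one term. The vanishing when $\sum b_i$ is odd is forced by the parity constraint imposed by the $(2,2,\ldots,2)$ ramification profile over $1 \in \mathbb{CP}^1$, which makes the total degree even. Symmetry is automatic from the symmetric role of the labels in the recursion.

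For part (3), contributions to $\N_{g,n}$ from boundary strata involving ghost components have degree strictly below $3g-3+n$ in the $b_i^2$, so the top coefficients match those of the uncompactified $N_{g,n}$, which Norbury already identified with the Kontsevich--Witten intersection numbers in the stated form. For part (4), quasi-polynomiality legitimises the evaluation at $b_i=0$; geometrically, this collapses all non-ghost combinatorics and leaves only the ghost stratification of $\overline{\mathcal M}_{g,n}$, yielding $\chi_{g,n}$. The recursion for $\chi_{g,n}$ is then derived by specialising the recursion of part (2) at $b_i=0$ and simplifying the combinatorial factors, with the conventions $\chi_{0,1}=0$ and $\chi_{0,2}=1$ absorbing the unstable cases. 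The main obstacle throughout is the careful handling of ghost-component weights in part (2); once this bookkeeping is set up correctly, the remaining parts are comparatively routine.
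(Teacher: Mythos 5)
First, a point of reference: the paper does not prove this theorem at all --- it is imported verbatim from Do--Norbury~\cite{do-nor11} as background, so there is no in-paper argument to compare yours against. Judged on its own terms, your sketch follows the right general philosophy (stable ribbon graphs, edge removal, ghost components weighted by Euler characteristics), but two of your reductions fail concretely.

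The clearest failure is your derivation of the Euler-characteristic recursion in part (4) ``by specialising the recursion of part (2) at $b_i=0$''. Setting every $b_i$ to zero kills both sides identically: the left side becomes $0\cdot \N_{g,n}(0,\ldots,0)$, and on the right side every summand carries a factor of $q$ or $r$, which is forced to equal $0$ once $p+q=b_i+b_j=0$ or $p+q+r=b_i=0$. The specialisation yields $0=0$ and no information. To extract the stated relation for $\chi_{g,n+1}$ one must instead evaluate the $(g,n+1)$ recursion at $(b_1,\ldots,b_{n+1})=(2,0,\ldots,0)$ --- which gives $2\,\N_{g,n+1}(2,0,\ldots,0)$ in terms of $\chi_{g,n}$, $\chi_{g-1,n+2}$ and the stable convolution --- and then combine it with the dilaton relation $\N_{g,n+1}(2,0,\ldots,0)-\chi_{g,n+1}=(2g-2+n)\,\chi_{g,n}$, established independently in~\cite{do-nor11}; the conventions $\chi_{0,1}=0$, $\chi_{0,2}=1$ then exactly account for passing from the stable to the unrestricted sum. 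Separately, your inductive proof of part (1) from part (2) is underpowered: you need the right-hand side to be divisible by $\sum b_i$ \emph{as a quasi-polynomial} and, more seriously, to be even in each $b_i$ individually, and ``absence of a degree-one term'' establishes neither. The standard route proves quasi-polynomiality cell by cell, via Ehrhart-type counts of lattice points in the polytopes ${\mathcal P}_\Gamma(b_1,\ldots,b_n)$, before the recursion is available --- indeed the recursion's very statement at $b_i=0$ presupposes the quasi-polynomial extension. Finally, the claim that ghost components ``ride along inertly'' in the edge-removal argument understates the main technical content of~\cite{do-nor11}: removing an edge can create or merge nodes, and one must verify that the $\chi(\MM_{g',n'})$ weights recombine consistently across the case analysis, which is where most of the work lies. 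Part (3), by contrast, is argued correctly: ghost strata contribute in strictly lower degree, so the top coefficients agree with the uncompactified count.
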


\subsection{Topological recursion} \label{sec:TR}

The topological recursion of Chekhov, Eynard and Orantin emerged as an abstract formulation of loop equations from the theory of matrix models~\cite{che-eyn06, eyn-ora07}. Beyond the original applications to matrix models, topological recursion has been shown or conjectured to govern a diverse array of problems in mathematics and physics, including the following: intersection theory on moduli spaces of curves~\cite{eyn-ora07}; enumeration of ribbon graphs and hypermaps~\cite{do-man14, dum-mul-saf-sor13, dun-ora-pop-sha14, nor13}; various kinds of Hurwitz numbers~\cite{bou-her-liu-mul14,do-dye-mat17,do-kar16,do-lei-nor16,eyn-mul-saf11,dun-kra-pop-sha19}; Gromov--Witten invariants of $\mathbb{CP}^1$~\cite{dun-ora-sha-spi14,nor-sco14}; Gromov--Witten invariants of toric Calabi--Yau threefolds~\cite{bou-kle-mar-pas09,eyn-ora15,fan-liu-zon16}; and asymptotics of coloured Jones polynomials of knots~\cite{bor-eyn15,dij-fuj-man11}.

In general, the topological recursion takes as input a spectral curve and produces multidifferentials $\omega_{g,n}$ for integers $g \geq 0$ and $n \geq 1$, which we refer to as {\em correlation differentials}. If the underlying Riemann surface of the spectral curve is $\mathcal C$, then $\omega_{g,n}$ is a symmetric meromorphic section of the line bundle $\pi_1^*(T^*\mathcal{C}) \otimes \pi_2^*(T^*\mathcal{C}) \otimes \cdots \otimes \pi_n^*(T^*\mathcal{C})$ on the Cartesian product $\mathcal{C}^n$, where $\pi_i\colon \mathcal{C}^n \to \mathcal{C}$ denotes projection onto the $i$th factor. An explicit definition of topological recursion follows.

\begin{itemize}
\item {\bf Initial data.} A {\em global spectral curve} is a tuple $(\mathcal{C}, x, y, T)$, where ${\mathcal C}$ is a compact Riemann surface, $x$ and~$y$ are meromorphic functions on ${\mathcal C}$, and $T$ is a Torelli marking on ${\mathcal C}$ --- that is, a choice of symplectic basis for $H_1({\mathcal C}; \mathbb{Z})$. We furthermore require the zeroes of $\dd x$ to be simple and disjoint from the zeroes of $\dd y$.

\item {\bf Base cases.} Let $\omega_{0,1}(z_1) = -y(z_1) \, \dd x(z_1)$. Let $\omega_{0,2}(z_1, z_2)$ be the unique meromorphic bidifferential on $\mathcal C$ that has double poles without residue along the diagonal $z_1=z_2$, is holomorphic away from the diagonal, and is normalised on the $\mathcal{A}$-cycles of the Torelli marking via the equation
\[ 
\oint_{\mathcal{A}_i} \omega_{0,2}(z_1, z_2) = 0, \qquad \text{ for } i = 1, 2, \ldots, \text{genus}({\mathcal C}).
\]

\item {\bf Recursion.} For $2g-2+n>0$, the multidifferentials $\omega_{g,n}(z_1, z_2, \ldots, z_n)$ are defined recursively by the following equation, where $S = \{2, 3, \ldots, n\}$ and $\zz_I = (z_{i_1}, z_{i_2}, \ldots, z_{i_k})$ for $I = \{i_1, i_2, \ldots, i_k\}$.
\[
\omega_{g,n}(z_1, \zz_S) = \sum_{\alpha} \Res_{z=\alpha} K(z_1, z) \Bigg[ \omega_{g-1, n+1}\left(z, s(z), \zz _S \right) + \sum_{\substack{g_1+g_2=g \\ I\sqcup J=S}}^{\circ} \omega_{g_1, |I|+1}(z, \zz_I) \, \omega_{g_2, |J|+1}(s(z), \zz_J) \Bigg]
\]
The outer summation is over the zeroes $\alpha$ of $\dd x$, while the symbol $\circ$ over the inner summation denotes that we exclude all terms with $\omega_{0,1}$. The function $s(z)$ is the unique non-identity holomorphic map defined in a neighbourhood of the simple branch point $\alpha \in {\mathcal C}$ satisfying $x(s(z)) = x(z)$. Finally, the kernel $K(z_1, z)$ is defined by
\[
K(z_1, z) = - \frac{\int_{o}^{z} \omega_{0,2}(z_1, \cdot\,)}{\left[ y(z) - y(s(z)) \right] \, \dd x(z)},
\]
where $o$ can be taken to be an arbitrary point on the spectral curve.
\end{itemize}

As mentioned in the introduction, one can observe that the topological recursion as defined above actually only requires the local information of the meromorphic functions $x, y$ and the bidifferential $\omega_{0,2}$ at the branch points of the spectral curve, in order to produce the correlation differentials. Thus, one can more generally define topological recursion on spectral curves comprising isolated local germs of $x, y$ and $\omega_{0,2}$, without requiring the existence of a global compact Riemann surface on which this data can be defined. We refer to this generalisation as {\em local topological recursion}, which we may define as follows.

\begin{itemize}
\item {\bf Initial data.} A {\em local spectral curve} is a tuple $({\mathcal C}, x, y, \omega_{0,2})$, where ${\mathcal C}$ is a Riemann surface that may be non-compact and disconnected, $x$ and~$y$ are meromorphic functions on ${\mathcal C}$, and $\omega_{0,2}$ is a meromorphic bidifferential on $\mathcal C$ that has double poles without residue along the diagonal $z_1=z_2$ and is holomorphic away from the diagonal. We furthermore require the zeroes of $\dd x$ to be simple and disjoint from the zeroes of $\dd y$.

\item {\bf Base cases.} Let $\omega_{0,1}(z_1) = -y(z_1) \, \dd x(z_1)$ and let $\omega_{0,2}(z_1, z_2)$ be as in the initial data.
 
\item {\bf Recursion.} For $2g-2+n>0$, the multidifferentials $\omega_{g,n}(z_1, z_2, \ldots, z_n)$ are defined recursively by the same equation as in the global topological recursion above.
\end{itemize}

Local topological recursion was introduced by Dunin-Barkowksi, Orantin, Shadrin and Spitz in their work relating topological recursion to Givental's approach to cohomological field theory~\cite{dun-ora-sha-spi14}. There they take $\mathcal C = {\mathcal D}_1 \sqcup {\mathcal D}_2 \sqcup \cdots \sqcup {\mathcal D}_N$ to be a disjoint union of $N$ small disks and they endow ${\mathcal D}_i$ with a local coordinate $z$ such that $x(z) = z^2 + a_i$ in ${\mathcal D}_i$, for $i = 1, 2, \ldots, N$. The initial data can then be expressed in terms of the coefficients of the local expansions of $y(z)$ in ${\mathcal D}_i$ and $\omega_{0,2}(z_1, z_2)$ in ${\mathcal D}_i \times {\mathcal D}_j$ for $i, j = 1, 2, \ldots, N$.

The spectral curve of~\cref{thm:TR} is local in the sense that the data cannot be extended to a compact Riemann surface such that $\omega_{0,2}$ satisfies the conditions of the global topological recursion. Although we do not take this approach here, the spectral curve could alternatively have been presented more abstractly as the disjoint union of two small disks, corresponding to the two branch points.

\begin{example} \label{ex:omegabases}
Recall that the local spectral curve of~\cref{thm:TR} is $\mathbb{C}^*$ equipped with the data
\[
x(z) = z + \frac{1}{z}, \qquad y(z) = z \qquad \text{and} \qquad \omega_{0,2}(z_1, z_2) = \frac{\dd z_1 \otimes \dd z_2}{(z_1 - z_2)^2} + \frac{\dd z_1 \otimes \dd z_2}{z_1 z_2}.
\]
The branch points are the zeroes of $\dd x$ --- namely, $z = 1$ and $z = -1$. At both of these branch points, the local involution $s(z)$ is given by $s(z) = \frac{1}{z}$. Thus, the recursion kernel can be taken to be
\[
K(z_1, z) = - \frac{\int_o^{z} \omega_{0,2}(z_1, \cdot\,)}{\left[ y(z) - y(s(z)) \right] \, \dd x(z)} = - \frac{\int_\infty^{z} \frac{\dd z_1 \, \dd t}{(z_1-t)^2} + \int_1^z \frac{\dd z_1 \, \dd t}{z_1 \, t}}{\left[ y(z) - y(s(z)) \right] \, \dd x(z)} = - \left[ \frac{1}{z_1-z} + \frac{\log(z)}{z_1} \right] \frac{z^3}{(1-z^2)^2} \frac{\dd z_1}{\dd z}.
\]
The recursion produces the following formulas in the cases $(g,n) = (0,3)$ and $(1,1)$.
\begin{align*}
\frac{\omega_{0,3}(z_1, z_2, z_3)}{\dd z_1 \, \dd z_2 \, \dd z_3} &= \sum_{\alpha = \pm 1} \Res_{z=\alpha} \frac{K(z_1, z)}{{\dd z_1 \, \dd z_2 \, \dd z_3}} \left[ \omega_{0,2}(z, z_2) \, \omega_{0,2}(\tfrac{1}{z}, z_3) + \omega_{0,2}(z, z_3) \, \omega_{0,2}(\tfrac{1}{z}, z_2) \right] \\
&= \sum_{\alpha = \pm 1} \Res_{z=\alpha} \left[ \frac{1}{z_1-z} + \frac{\log(z)}{z_1} \right] \frac{z^3}{(1-z^2)^2} \left[ \frac{1}{(z-z_2)^2 \, (1-zz_3)^2} + \frac{1}{(z-z_3)^2 \, (1-zz_2)^2} \right] \dd z \\
&= \frac{1}{2 z_1 z_2 z_3} \bigg[ \prod_{i=1}^3 \frac{z_i^2 - z_i + 1}{(z_i-1)^2} + \prod_{i=1}^3 \frac{z_i^2 + z_i + 1}{(z_i+1)^2} \bigg] \\
\frac{\omega_{1,1}(z_1)}{\dd z_1} &= \sum_{\alpha = \pm 1} \Res_{z=\alpha} \frac{K(z_1, z)}{\dd z_1} \, \omega_{0,2}(z, \tfrac{1}{z}) \\
&= \sum_{\alpha = \pm 1} \Res_{z=\alpha} \left[ \frac{1}{z_1-z} + \frac{\log(z)}{z_1} \right] \frac{z^3}{(1-z^2)^2} \left( \frac{1}{(z^2 - 1)^2} + 1 \right) \, \dd z \\
&= \frac{5z_1^8 - 8z_1^6 + 18z_1^4 - 8z_1^2 + 5}{12 z_1 (z_1^2-1)^4}
\end{align*}
\end{example}

\section{Proof of the main theorem} \label{sec:proof}

For the proof of~\cref{thm:TR}, we adopt a general strategy that has been previously used to prove the topological recursion for enumerative problems, such as counting lattice points in uncompactified moduli spaces of curves~\cite{nor13} and various kinds of Hurwitz numbers~\cite{eyn-mul-saf11,bou-her-liu-mul14,do-lei-nor16,do-dye-mat17}. The modification to $\omega_{0,2}$ in our result adds minor technical difficulties, since logarithmic terms are introduced into the topological recursion kernel. We break down the proof into the following parts.

\begin{enumerate}
\item Define natural multidifferentials $\Omega_{g,n}(z_1, z_2, \ldots, z_n)$ for the enumerative problem and use the quasi-polynomiality of~\cref{thm:propNgn} to deduce analytic and symmetry properties for $\Omega_{g,n}(z_1, z_2, \ldots, z_n)$ (\cref{prop:vectorspace}).

\item Express the combinatorial recursion of \cref{thm:propNgn} in terms of the aforementioned multidifferentials $\Omega_{g,n}(z_1, z_2, \ldots, z_n)$ (\cref{prop:genfun}).
	
\item Break the natural symmetry of the recursion for $\Omega_{g,n}(z_1, z_2, \ldots, z_n)$ by taking the symmetric part with respect to $z_1$, using the symmetry properties of \cref{prop:vectorspace} (\cref{prop:asymmetricgf}).

\item Use the fact that a rational differential form is equal to the sum of its principal parts, where the principal part of $\Omega(z_1)$ at $z_1 = \alpha$ may be defined by
\begin{equation} \label{eq:principalpart}
\Res_{z=\alpha} \frac{\dd z_1}{z_1 - z} \Omega(z).
\end{equation}
Finally, match the resulting recursion for the multidifferentials $\Omega_{g,n}(z_1, z_2, \ldots, z_n)$ with the topological recursion for the correlation differentials $\omega_{g,n}(z_1, z_2, \ldots, z_n)$.
\end{enumerate}

These four steps are carried out in the following four subsections, respectively.

\subsection{Structure of the enumeration}

From the enumeration $\N_{g,n}(b_1, b_2, \ldots, b_n)$ of~\cref{def:Ngn}, we define the following formal multidifferentials.
\begin{equation} \label{eq:defOmega}
\Omega_{g,n}(z_1, z_2, \ldots, z_n) = \sum_{b_1, b_2, \ldots, b_n=0}^\infty \N_{g,n}(b_1, b_2, \ldots, b_n) \prod_{i=1}^n [b_i] z_i^{b_i-1} \, \dd z_i
\end{equation}
\cref{thm:TR} is essentially the statement that the correlation differentials produced by the topological recursion applied to the spectral curve of~\cref{eq:spectralcurve} satisfy
\[
\Omega_{g,n}(z_1, z_2, \ldots, z_n) = \omega_{g,n}(z_1, z_2, \ldots, z_n), \qquad \text{for } (g,n) \neq (0,1) \text{ or } (0,2).
\]
The primary aim is to understand the structure of $\Omega_{g,n}(z_1, z_2, \ldots, z_n)$, which will play a crucial role in the proof of~\cref{thm:TR}. The quasi-polynomiality of $\N_{g,n}(b_1, b_2, \ldots, b_n)$ stated in~\cref{thm:propNgn} is equivalent to the fact that for $(g,n) \neq (0,1)$ or $(0,2)$,
\begin{equation} \label{eq:vectorspace}
\Omega_{g,n}(z_1, z_2, \ldots, z_n) \in V(z_1) \otimes V(z_2) \otimes \cdots \otimes V(z_n),
\end{equation}
where we define the vector space $V(z)$ as follows.

\begin{definition}
Define the complex vector space of differential forms
\[
V(z) = \left\{ \sum_{b=0}^{\infty} \, [b] Q(b) z^{b-1} \, \dd z ~\Big|~ Q(b) \text{ is a quasi-polynomial in } b^2 \right\}.
\]
\end{definition}

\begin{lemma} \label{lem:basis}
The vector space $V(z)$ has the basis $\left\{\xi^{\mathrm{even}}_0(z), \xi^{\mathrm{odd}}_0(z), \xi^{\mathrm{even}}_1(z), \xi^{\mathrm{odd}}_1(z), \xi^{\mathrm{even}}_2(z), \xi^{\mathrm{odd}}_2(z), \ldots \right\}$, where
\[
\xi^{\mathrm{even}}_k(z) = \frac{\dd}{\dd z} \bigg( z \frac{\dd}{\dd z} \bigg)^{2k} \frac{z^2}{1-z^2} \, \dd z + \frac{\delta_{k,0}}{z} \, \dd z \qquad \text{and} \qquad \xi^{\mathrm{odd}}_k(z) = \frac{\dd}{\dd z} \bigg( z \frac{\dd}{\dd z} \bigg)^{2k} \frac{z}{1-z^2} \, \dd z.
\]
\end{lemma}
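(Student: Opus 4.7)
The plan is to verify directly that $\xi^{\mathrm{even}}_k(z)$ and $\xi^{\mathrm{odd}}_k(z)$ are the generating series associated to the two natural ``basis quasi-polynomials'' $b \mapsto b^{2k} \cdot \mathbf{1}_{b \text{ even}}$ and $b \mapsto b^{2k} \cdot \mathbf{1}_{b \text{ odd}}$ respectively, and then to deduce both spanning and linear independence from this identification.

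First I would record the elementary geometric series
\[
\sum_{b \geq 2,\; b \text{ even}} z^b = \frac{z^2}{1-z^2}, \qquad \sum_{b \geq 1,\; b \text{ odd}} z^b = \frac{z}{1-z^2},
\]
and apply the Euler operator $z \frac{\dd}{\dd z}$ iteratively. Since $(z \frac{\dd}{\dd z})^{2k} z^b = b^{2k} z^b$, and then $\frac{\dd}{\dd z} b^{2k} z^b = b \cdot b^{2k} z^{b-1}$, applying $\frac{\dd}{\dd z}(z \frac{\dd}{\dd z})^{2k}$ to the two series above yields exactly $\sum b \cdot b^{2k} z^{b-1}\, \dd z$ ranging over the appropriate parity class with $b \geq 2$ or $b \geq 1$. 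The only discrepancy with $\sum_{b \geq 0} [b]\, b^{2k} \cdot \mathbf{1}_{b \text{ even}} \, z^{b-1} \, \dd z$ is the missing $b=0$ contribution, which equals $1$ exactly when $k=0$ (as $[0]=1$ and $0^{2k}$ vanishes otherwise); this is precisely the $\delta_{k,0}/z \, \dd z$ correction term in $\xi^{\mathrm{even}}_k$. The odd case requires no such correction since $b=0$ is not odd.

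For spanning, any element of $V(z)$ is prescribed by a quasi-polynomial $Q(b)$ in $b^2$, which by definition restricts to a polynomial in $b^2$ on each parity class; expanding those two polynomials in the monomial basis $\{b^{2k}\}_{k \geq 0}$ and invoking the identification above produces the required expression as a finite linear combination of the $\xi^{\mathrm{even}}_k$ and $\xi^{\mathrm{odd}}_k$. For linear independence, I would extract the coefficient of $z^{b-1}\,\dd z$ from a vanishing finite combination $\sum_k c^{\mathrm{even}}_k \xi^{\mathrm{even}}_k + \sum_k c^{\mathrm{odd}}_k \xi^{\mathrm{odd}}_k$: for each sufficiently large even $b$ this yields the polynomial identity $\sum_k c^{\mathrm{even}}_k b^{2k+1} = 0$, and for odd $b$ the analogous odd-coefficient identity, both of which force all coefficients to vanish since a nonzero polynomial cannot have infinitely many roots.

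No step here is technically deep; the only point requiring care is the bookkeeping at $b=0$, which is the sole reason the definition of $\xi^{\mathrm{even}}_0$ carries the extra $\frac{1}{z}\,\dd z$ term, and which should be flagged explicitly in the write-up to make the matching transparent.
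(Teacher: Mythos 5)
Your proposal is correct and follows essentially the same route as the paper: identify $\xi^{\mathrm{even}}_k$ and $\xi^{\mathrm{odd}}_k$ as the generating differentials of the monomial quasi-polynomials $b^{2k}$ on each parity class, sum the geometric series, apply $\frac{\dd}{\dd z}(z\frac{\dd}{\dd z})^{2k}$, and account for the $b=0$ term via $\delta_{k,0}$. The paper simply asserts that the monomials form a basis of the space of quasi-polynomials, whereas you spell out the spanning and linear-independence arguments explicitly; this is a harmless elaboration, not a different method.
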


\begin{proof}
Begin by observing that a quasi-polynomial is a unique linear combination of monomials, acting on either even or odd arguments. So we have the following basis for $V(z)$, as $k$ varies over the non-negative integers.
\begin{align*}
\xi^{\mathrm{even}}_k(z) &= \sum_{\substack{b \geq 0\\ b \text{ even}}} \, [b] \cdot b^{2k} z^{b-1} \, \dd z & \xi^{\mathrm{odd}}_k(z) &= \sum_{\substack{b \geq 0 \\ b \text{ odd}}} \, [b] \cdot b^{2k} z^{b-1} \, \dd z \\
&= \frac{\dd}{\dd z} \bigg( z \frac{\dd}{\dd z} \bigg)^{2k} \sum_{\substack{b > 0 \\ b \text{ even}}} z^b \, \dd z + \frac{\delta_{k,0}}{z} \, \dd z &&= \frac{\dd}{\dd z} \bigg( z \frac{\dd}{\dd z} \bigg)^{2k} \sum_{\substack{b > 0 \\ b \text{ odd}}} z^b \, \dd z \\
&= \frac{\dd}{\dd z} \bigg( z \frac{\dd}{\dd z} \bigg)^{2k} \frac{z^2}{1-z^2} \, \dd z + \frac{\delta_{k,0}}{z} \, \dd z &&= \frac{\dd}{\dd z} \bigg( z \frac{\dd}{\dd z} \bigg)^{2k} \frac{z}{1-z^2} \, \dd z \qedhere
\end{align*}
\end{proof}

\begin{example} \label{ex:Omegabases}
It was previously shown that~\cite{do-nor11}
\[
\N_{0,3}(b_1, b_2, b_3) = \begin{cases}
1, & b_1+b_2+b_3 \text{ even}, \\
0, & b_1+b_2+b_3 \text{ odd},
\end{cases}
\qquad \text{and} \qquad
\N_{1,1}(b_1) = \begin{cases}
\frac{1}{48}(b_1^2 + 20), & b_1 \text{ even}, \\
0, & b_1 \text{ odd}.
\end{cases}
\]
Hence, we can express the corresponding generating differentials in terms of the basis elements of~\cref{lem:basis}.
\begin{align*}
\Omega_{0,3}(z_1, z_2, z_3) ={}& \zeta_0^{\mathrm{even}}(z_1) \, \zeta_0^{\mathrm{even}}(z_2) \, \zeta_0^{\mathrm{even}}(z_3) + \zeta_0^{\mathrm{even}}(z_1) \, \zeta_0^{\mathrm{odd}}(z_2) \, \zeta_0^{\mathrm{odd}}(z_3) \nonumber \\
&+ \zeta_0^{\mathrm{odd}}(z_1) \, \zeta_0^{\mathrm{even}}(z_2) \, \zeta_0^{\mathrm{odd}}(z_3) + \zeta_0^{\mathrm{odd}}(z_1) \, \zeta_0^{\mathrm{odd}}(z_2) \, \zeta_0^{\mathrm{even}}(z_3) \nonumber \\
={}& \frac{\dd z_1 \, \dd z_2 \, \dd z_3}{2 z_1 z_2 z_3} \bigg[ \prod_{i=1}^3 \frac{z_i^2 - z_i + 1}{(z_i-1)^2} + \prod_{i=1}^3 \frac{z_i^2 + z_i + 1}{(z_i+1)^2} \bigg] \\
\Omega_{1,1}(z_1) ={}& \frac{1}{48} (\zeta_1^{\mathrm{even}}(z_1) + 20 \, \zeta_0^{\mathrm{even}}(z_1)) \nonumber \\
={}& \frac{\dd z_1}{12 z_1} \frac{5z_1^8 - 8z_1^6 + 18z_1^4 - 8z_1^2 + 5}{(z_1^2-1)^4}
\end{align*}
\end{example}

A consequence of~\cref{lem:basis} is that elements of $V(z)$ are rational differential forms. The next result reveals that they possess interesting pole structure and symmetry.

\begin{proposition} \label{prop:vectorspace}
For all $\Omega(z) \in V(z)$,
\begin{itemize}
\item $\Omega(z)$ has poles only at $z = 1$, $z = -1$ and $z = 0$, with only simple poles occurring at $z=0$; and
\item $\Omega(z) + \Omega(\frac{1}{z}) = 0$.
\end{itemize}
\end{proposition}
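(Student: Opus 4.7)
The plan is to verify both assertions on the basis given in \cref{lem:basis} and extend by linearity. Let $\sigma\colon z \mapsto 1/z$ denote the standard involution of $\mathbb{C}^*$, let $E = z\frac{\dd}{\dd z}$ denote the Euler operator, and set $f_{\mathrm{e}}(z) = \frac{z^2}{1-z^2}$ and $f_{\mathrm{o}}(z) = \frac{z}{1-z^2}$, so that $\xi^{\mathrm{even}}_k = \dd(E^{2k} f_{\mathrm{e}}) + \delta_{k,0}\frac{\dd z}{z}$ and $\xi^{\mathrm{odd}}_k = \dd(E^{2k} f_{\mathrm{o}})$.

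For the pole structure, I would observe that $f_{\mathrm{e}}$ and $f_{\mathrm{o}}$ are rational functions whose only finite poles are at $z = \pm 1$ and which vanish at $z = 0$. Both properties are preserved by $E$: at $z = 0$ the operator acts as $z^n \mapsto n z^n$, hence preserves holomorphy and, for $n \geq 1$, vanishing; at $z = \pm 1$ it can only increase pole orders, never create or relocate poles. Therefore $E^{2k} f_{\mathrm{e}}$ and $E^{2k} f_{\mathrm{o}}$ are holomorphic at $z = 0$ and have only $\pm 1$ as finite poles, and their exterior derivatives inherit these properties. Consequently the only simple pole at $z = 0$ anywhere in the basis is the explicit $\frac{\dd z}{z}$ correction in $\xi^{\mathrm{even}}_0$.

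For the anti-symmetry, the chain rule immediately yields $E\sigma^* = -\sigma^* E$, hence $E^{2k}$ commutes with $\sigma^*$. Direct computation gives $\sigma^* f_{\mathrm{o}} = -f_{\mathrm{o}}$ and $\sigma^* f_{\mathrm{e}} = -f_{\mathrm{e}} - 1$. Applying $E^{2k}$ and using that $E$ annihilates constants, I conclude that $E^{2k} f_{\mathrm{o}}$ is $\sigma^*$-anti-invariant for all $k \geq 0$ and $E^{2k} f_{\mathrm{e}}$ is $\sigma^*$-anti-invariant for all $k \geq 1$. Since $\dd$ commutes with $\sigma^*$, the same is true of the differentials $\dd(E^{2k} f_{\mathrm{e}})$ and $\dd(E^{2k} f_{\mathrm{o}})$. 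Combined with $\sigma^*(\dd z/z) = -\dd z/z$, this establishes $\sigma^*$-anti-invariance of every basis element, the $\frac{\dd z}{z}$ correction in $\xi^{\mathrm{even}}_0$ being exactly what absorbs the leftover constant $-1$ from $\sigma^* f_{\mathrm{e}} + f_{\mathrm{e}}$.

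I do not anticipate a serious obstacle; the argument is entirely elementary. The one subtlety worth flagging is the bookkeeping of the constant $-1$ in $\sigma^* f_{\mathrm{e}} + f_{\mathrm{e}}$: its compensation by the additive $\frac{\dd z}{z}$ is the hidden reason that term appears in the definition of $\xi^{\mathrm{even}}_0$ in \cref{lem:basis}, and tracking it through the proof is the one place where a sign error would be easy to introduce.
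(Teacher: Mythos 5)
Your proof is correct and follows essentially the same route as the paper: verify both properties on the basis of \cref{lem:basis} and extend by linearity, with your identity $E\sigma^* = -\sigma^* E$ being exactly the paper's observation that $\tfrac{1}{z}\frac{\dd}{\dd(1/z)} = -z\frac{\dd}{\dd z}$. One remark at the end is wrong, though it does not affect the conclusion: the $\frac{\dd z}{z}$ term does not ``absorb'' the leftover constant $-1$ from $\sigma^* f_{\mathrm{e}} + f_{\mathrm{e}}$. That constant is simply annihilated by $\dd$, while $\frac{\dd z}{z}$ is separately anti-invariant on its own ($\sigma^*(\dd z/z) = -\dd z/z$), so the two contributions vanish independently rather than cancelling each other; the actual reason the $\frac{\dd z}{z}$ correction appears in $\xi^{\mathrm{even}}_0$ is the $b=0$ term of the generating series under the convention $[0]=1$, as in the proof of \cref{lem:basis}.
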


\begin{proof}
The first statement is immediate from~\cref{lem:basis}, since the operator $\frac{\dd}{\dd z} (z \, \cdot)$ cannot introduce new poles on $\mathbb{CP}^1$. The second statement can be verified on the basis elements $\xi_k^{\mathrm{even}}(z)$ and $\xi_k^{\mathrm{odd}}(z)$, then deduced for all $\Omega(z) \in V(z)$ by linearity. The verification on basis elements is as follows, using the observation that $\tfrac{1}{z} \frac{\dd}{\dd (1/z)} = -z \frac{\dd}{\dd z}$.
\begin{align*}
\xi_k^{\mathrm{even}}(z) + \xi_k^{\mathrm{even}}(\tfrac{1}{z}) &= \dd \bigg[ \bigg( z \frac{\dd}{\dd z} \bigg)^{2k} \frac{z^2}{1-z^2} + \delta_{k,0} \log(z) \bigg] + \dd \bigg[ \bigg( -z \frac{\dd}{\dd z} \bigg)^{2k} \frac{(\tfrac{1}{z})^2}{1-(\tfrac{1}{z})^2} + \delta_{k,0} \log(\tfrac{1}{z}) \bigg] \\
&= \dd \bigg[ \bigg( z \frac{\dd}{\dd z} \bigg)^{2k} \bigg( \frac{z^2}{1-z^2} + \frac{1}{z^2-1} \bigg) + \delta_{k,0} \big( \log(z) + \log(\tfrac{1}{z}) \big) \bigg] = 0 \\ \\
\xi_k^{\mathrm{odd}}(z) + \xi_k^{\mathrm{odd}}(\tfrac{1}{z}) &= \dd \bigg[ \bigg( z \frac{\dd}{\dd z} \bigg)^{2k} \frac{z}{1-z^2} \bigg] + \dd \bigg[ \bigg( -z \frac{\dd}{\dd z} \bigg)^{2k} \frac{\tfrac{1}{z}}{1-(\tfrac{1}{z})^2} \bigg] \\
&= \dd \bigg[ \bigg( z \frac{\dd}{\dd z} \bigg)^{2k} \bigg( \frac{z}{1-z^2} + \frac{z}{z^2-1} \bigg) \bigg] = 0 \qedhere
\end{align*}
\end{proof}

We next state a lemma concerning $V(z)$ that will be necessary for the subsequent proof of~\cref{thm:TR}.

\begin{lemma} \label{lem:resatzero}
For all $\Omega(z) \in V(z)$,
\begin{equation}
\sum_{\alpha = \pm 1} \Res_{z=\alpha} \Omega(z) \log(z) = \Res_{z=0} \Omega(z).
\end{equation}
\end{lemma}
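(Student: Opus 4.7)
By linearity, the identity need only be checked on the basis $\{\xi_k^{\mathrm{even}}, \xi_k^{\mathrm{odd}}\}_{k\geq 0}$ provided by \cref{lem:basis}. Each such basis element can be written in the form $\Omega = \dd F + c\,\frac{\dd z}{z}$, where $F$ is a rational function on $\mathbb{CP}^1$ with poles only at $z = \pm 1$ (so in particular holomorphic at $0$ and $\infty$) and $c$ is a constant, equal to $\delta_{k,0}$ in the even case and $0$ in the odd case. The right-hand side is then immediate: $\Res_{z=0}\Omega = c$, since $F$ is holomorphic at $0$.

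For the left-hand side I would use the integration-by-parts identity $(\dd F)\log z = \dd(F\log z) - F\,\frac{\dd z}{z}$. Choosing a local branch of $\log z$ near each $\alpha \in \{\pm 1\}$ makes $F\log z$ locally meromorphic, so $\dd(F\log z)$ has no residue at $\alpha$; moreover $c\,\frac{\log z}{z}\,\dd z$ has no residue at $\pm 1$ because $\log z / z$ is locally holomorphic there. This reduces the left-hand side to
\[
\sum_{\alpha=\pm 1}\Res_{z=\alpha}\Omega\log z \;=\; -\sum_{\alpha=\pm 1}\Res_{z=\alpha} F\,\frac{\dd z}{z}.
\]
Applying the residue theorem on $\mathbb{CP}^1$ to the rational differential $F\frac{\dd z}{z}$, whose only remaining poles lie at $0$ and $\infty$ with residues $F(0)$ and $-F(\infty)$ respectively (the latter computed in the coordinate $w = 1/z$), yields $F(0) - F(\infty)$ for the left-hand side.

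It would then remain to verify $F(0) - F(\infty) = c$ on each basis element. The value $F(0) = 0$ is immediate, since $\frac{z^2}{1-z^2}$ and $\frac{z}{1-z^2}$ both vanish at $z = 0$ and the operator $z\frac{\dd}{\dd z}$ preserves vanishing at $0$. For $F(\infty)$, passing to the coordinate $w = 1/z$ turns $z\frac{\dd}{\dd z}$ into $w\frac{\dd}{\dd w}$, which annihilates constants; hence the only surviving contribution at $w = 0$ comes from the constant term of the input series, giving $F(\infty) = -1$ for $\xi_0^{\mathrm{even}}$ (from $\frac{1}{w^2-1}\big|_{w=0} = -1$) and $F(\infty) = 0$ in every other case. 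This matches $-c$ in every case.

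\textbf{Main obstacle.} The only real subtlety is the logarithm appearing in the statement. One must check that the choice of local branch for $\log z$ near $z = -1$ does not affect the residue identity; this is harmless because any two branches differ by a local constant and any $\Omega\in V(z)$ has vanishing residue at $z = \pm 1$ (a direct consequence of the involution $z\mapsto 1/z$ of \cref{prop:vectorspace}, which fixes these points and sends $\Omega$ to $-\Omega$). Everything else is routine bookkeeping with rational residues.
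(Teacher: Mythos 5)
Your proof is correct and follows essentially the same route as the paper's: reduce to the basis elements of \cref{lem:basis} by linearity, integrate by parts to trade $\log z$ for $\dd z/z$, and invoke the global residue theorem on $\mathbb{CP}^1$ to relocate the residues from $z=\pm 1$ to $z=0$ and $z=\infty$. Your uniform evaluation $F(0)-F(\infty)=c$ neatly replaces the paper's case split between $k\geq 1$ and $k=0$, and your explicit remark that the branch ambiguity of $\log z$ at $z=-1$ is harmless (since $\Res_{z=\pm 1}\Omega=0$) addresses a point the paper leaves implicit.
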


\begin{proof}
We simply verify the equation for the basis elements $\xi_k^{\mathrm{even}}(z)$ and $\xi_k^{\mathrm{odd}}(z)$, then deduce it for all $\Omega(z) \in V(z)$ by linearity. Note that the residue on the right side is 0 for each basis element, apart from $\xi_0^{\mathrm{even}}(z)$. So let us first suppose that $k \geq 1$ and verify the equation for $\xi_k^{\mathrm{even}}(z)$.
\begin{align*}
\sum_{\alpha = \pm 1} \Res_{z=\alpha} \xi_k^{\mathrm{even}}(z) \log(z) &= - \sum_{\alpha = \pm 1} \Res_{z=\alpha} \bigg[ \int \xi_k^{\mathrm{even}}(z) \bigg] \dd \log(z) = - \sum_{\alpha = \pm 1} \Res_{z=\alpha} \bigg[ \bigg( z \frac{\dd}{\dd z} \bigg)^{2k} \frac{z^2}{1-z^2} \bigg] \frac{\dd z}{z} \\
&= - \sum_{\alpha = \pm 1} \Res_{z=\alpha} \bigg[ \bigg( z \frac{\dd}{\dd z} \bigg)^{2k} \frac{1}{1-z^2} \bigg] \frac{\dd z}{z} \\
&= \Res_{z=0} \bigg[ \frac{\dd}{\dd z} \bigg( z \frac{\dd}{\dd z} \bigg)^{2k-1} \frac{1}{1-z^2} \bigg] \dd z
\end{align*}
The first line uses the fact that a function $F(z)$ that is meromorphic at $z = \alpha$ satisfies $\displaystyle \Res_{z=\alpha} \dd F = 0$. It follows that $\displaystyle \Res_{z=\alpha} f \, \dd g = - \displaystyle \Res_{z=\alpha} g \, \dd f$ for any two functions $f(z)$ and $g(z)$ that are meromorphic at $z = \alpha$. The second line uses the fact that $k$ is positive. The third line uses the fact that the sum of the residues of a rational differential form is equal to 0. It is clear that the final expression obtained is equal to 0, since the argument is holomorphic at $z = 0$. This completes the proof in this case.

The analogous calculation for $\xi_k^{\mathrm{odd}}(z)$ and $k \geq 0$ is almost identical to the previous and is omitted for brevity. It remains to treat the case $\xi_0^{\mathrm{even}}(z)$, in which case the residue on the right side of the equation is evidently equal to 1. We calculate the left side as follows.
\begin{align*}
\sum_{\alpha = \pm 1} \Res_{z=\alpha} \xi_0^{\mathrm{even}}(z) \log(z) &= \sum_{\alpha = \pm 1} \Res_{z=\alpha} \bigg[ \frac{\dd}{\dd z} \frac{z^2}{1-z^2} \, \dd z + \frac{\dd z}{z} \bigg] \log(z) = \sum_{\alpha = \pm 1} \Res_{z=\alpha} \bigg[ \frac{\dd}{\dd z} \frac{z^2}{1-z^2} \, \dd z \bigg] \log(z) \\
&= - \sum_{\alpha = \pm 1} \Res_{z=\alpha} \bigg[ \frac{z^2}{1-z^2} \bigg] \frac{\dd z}{z} = - \sum_{\alpha = \pm 1} \Res_{z=\alpha} \frac{z}{1-z^2} \, \dd z
\end{align*}
The first line uses the definition of $\xi_0^{\mathrm{even}}(z)$ and removes a summand that is clearly holomorphic at $z = \pm 1$. The second line uses the fact that $\displaystyle \Res_{z=\alpha} f \, \dd g = - \displaystyle \Res_{z=\alpha} g \, \dd f$ for any two functions $f(z)$ and $g(z)$ that are meromorphic at $z = \alpha$. It is then straightforward to calculate that the final expression obtained is equal to~1. This completes the proof in this case.
\end{proof}

\subsection{Combinatorial recursion}

We now express the combinatorial recursion of~\cref{thm:propNgn} in terms of natural generating functions. Rather than using the multidifferentials $\Omega_{g,n}(z_1, z_2, \ldots, z_n)$ defined earlier, it will be convenient to work with the closely related generating functions
\[
W_{g,n}(z_1, z_2, \ldots, z_n) = \frac{\Omega_{g,n}(z_1, z_2, \ldots, z_n)}{\dd z_1 \, \dd z_2 \, \cdots \, \dd z_n} = \sum_{b_1, b_2, \ldots, b_n=0}^\infty \N_{g,n}(b_1, b_2, \ldots, b_n) \prod_{i=1}^n [b_i] z_i^{b_i-1}.
\]

\begin{proposition} \label{prop:genfun}
For $2g-2+n \geq 2$, we have the following equation, where $S = \{1, 2, \ldots, n\}$ and $\zz_I = (z_{i_1}, z_{i_2}, \ldots, z_{i_k})$ for $I = \{i_1, i_2, \ldots, i_k\}$.
\begin{align*}
& \sum_{i=1}^n \frac{\partial}{\partial z_i} z_i W_{g,n}(\zz_S) = \sum_{i < j} \Bigg( \frac{\partial}{\partial z_i} \bigg[ \frac{2}{z_j} \frac{z_i^3}{(1-z_i^2)^2} W_{g,n-1}(\zz_{S\setminus\{j\}}) \bigg] + \frac{\partial}{\partial z_j} \bigg[ \frac{2}{z_i} \frac{z_j^3}{(1-z_j^2)^2} W_{g,n-1}(\zz_{S\setminus\{i\}}) \bigg] \\
& \qquad + 2 \frac{\partial}{\partial z_i} \frac{\partial}{\partial z_j} \bigg[ \frac{z_j}{z_i - z_j} \frac{z_i^3}{(1-z_i^2)^2} W_{g,n-1}(\zz_{S\setminus\{j\}}) - \frac{z_i}{z_i - z_j} \frac{z_j^3}{(1-z_j^2)^2} W_{g,n-1}(\zz_{S\setminus\{i\}}) \bigg] \Bigg) \\
& \qquad + \sum_{i=1}^{n} \frac{\partial}{\partial z_i} \frac{z_i^4}{(1-z_i^2)^2} \Bigg[ W_{g-1,n+1}(z_i, z_i, \zz_{S\setminus\{i\}})+ \sum_{\substack{g_1+g_2=g\\ I\sqcup J=S\setminus\{i\}}}^{\mathrm{stable}} W_{g_1,|I|+1}(z_i,\zz_I ) \, W_{g_2,|J|+1}(z_i, \zz_J ) \Bigg]
\end{align*}
\end{proposition}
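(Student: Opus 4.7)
The plan is to derive the stated generating function identity by multiplying both sides of the combinatorial recursion of \cref{thm:propNgn}(2) by $\prod_{k=1}^n [b_k] z_k^{b_k-1}$ and summing over all $(b_1, \ldots, b_n) \in \mathbb{Z}_{\geq 0}^n$. For the left-hand side, the elementary identity
\[
\sum_{b \geq 0} b \cdot a_b \cdot [b] z^{b-1} \;=\; \frac{\partial}{\partial z}\Bigl(z \sum_{b \geq 0} a_b [b] z^{b-1}\Bigr)
\]
(both sides reduce to $\sum_{b \geq 1} b^2 a_b z^{b-1}$, as the $b=0$ term vanishes in either expression) converts $\sum_i b_i$ into $\sum_i \frac{\partial}{\partial z_i}(z_i W_{g,n}(\zz_S))$. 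The right-hand side splits into the join sum over pairs $i<j$ and the self-join/split sum over single indices $i$ carrying the factor $\tfrac{1}{2}$, each handled by a separate auxiliary generating function identity valid for arbitrary input sequences.

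\textbf{Self-join/split contribution.} The first auxiliary identity to prove is
\[
\sum_{b \geq 0} [b] z^{b-1} \sum_{\substack{p+q+r=b\\ r \text{ even}}} [p][q]\, r\, A_p B_q \;=\; 2\frac{\partial}{\partial z}\left[\frac{z^4}{(1-z^2)^2} A(z) B(z)\right],
\]
for arbitrary sequences $(A_p), (B_q)$ with generating functions $A(z), B(z)$. I would establish it by expanding the left-hand side using the closed forms $\sum_{r \geq 0, r\text{ even}} r z^r = \frac{2z^2}{(1-z^2)^2}$ and $\sum_{r \geq 0, r\text{ even}} r^2 z^r = \frac{4z^2(1+z^2)}{(1-z^2)^3}$, combined with the identities $\sum_{p,q}[p][q] A_p B_q (p+q) z^{p+q-1} = \frac{\partial}{\partial z}[z^2 A(z) B(z)]$ and $\sum_{p,q}[p][q]A_p B_q z^{p+q-1} = z A(z) B(z)$. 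Direct algebraic simplification matches the right-hand side. Specialising $A_p B_q$ first to $\N_{g-1,n+1}(p, q, \bb_{S\setminus\{i\}})$ (whose generating function collapses to $W_{g-1,n+1}(z_i, z_i, \zz_{S\setminus\{i\}})$) and then to each stable splitting, and incorporating the factor $\tfrac{1}{2}$ from the combinatorial recursion, produces the second sum in the proposition.

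\textbf{Join-two-faces contribution and main obstacle.} For fixed $i<j$, set $F(z) = W_{g,n-1}(z, \zz_{S\setminus\{i,j\}})$ and $G(t) = \frac{2t^3 F(t)}{(1-t^2)^2} = \sum_m G_m t^m$, which satisfies $G_0 = G_1 = 0$. The join-term contribution equals $\sum_{m \geq 0} U_m(z_i, z_j) G_m$, where $U_m(z_i, z_j) := \sum_{b_i+b_j=m}[b_i][b_j] z_i^{b_i-1} z_j^{b_j-1}$. I would decompose $U_m$ into three pieces according to whether $b_i = 0$, $b_j = 0$, or both are positive, paying careful attention to the $[b]=1$ exception at $b=0$. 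The two boundary pieces contribute $G'(z_i)/z_j$ and $G'(z_j)/z_i$, which after substitution of the explicit form of $G$ become the local terms $\frac{\partial}{\partial z_j}[\frac{2}{z_i}\frac{z_j^3 F(z_j)}{(1-z_j^2)^2}]$ and its $(i,j)$-swap. For the bulk piece I would use $\sum_{b, c \geq 1} bc\, z_i^{b-1} z_j^{c-1} G_{b+c} = \frac{\partial^2}{\partial z_i \partial z_j}\sum_{b, c \geq 1} z_i^b z_j^c G_{b+c}$ together with the partial-fraction identity
\[
\sum_{b+c=m,\, b,c\geq 1} x^b y^c \;=\; \frac{xy(x^{m-1}-y^{m-1})}{x-y},
\]
which combined with $G_0 = G_1 = 0$ collapses the inner sum to $\frac{z_j G(z_i)-z_i G(z_j)}{z_i-z_j}$. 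Substituting $G$ explicitly yields the propagator kernel $\frac{z_j}{z_i-z_j}\frac{z_i^3}{(1-z_i^2)^2}$ and its partner, matching the mixed-derivative term in the proposition. The principal obstacle is this join calculation: separating the boundary and bulk contributions cleanly, tracking the $b=0$ exceptions, and executing the partial-fraction step are the only technical subtleties. Once both auxiliary identities are in hand, summing over $i < j$ and over $i$ assembles the full statement of \cref{prop:genfun}.
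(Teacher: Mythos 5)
Your proposal is correct and follows essentially the same route as the paper: apply the operator $\sum_{\bb} [\,\cdot\,] \prod [b_i] z_i^{b_i-1}$ to the combinatorial recursion, split the pair-sum kernel $U_m$ into the $b=0$ boundary pieces and the bulk handled by the partial-fraction identity $\sum_{b+c=m}\, x^b y^c = \frac{xy(x^{m-1}-y^{m-1})}{x-y}$, and use $\sum_{q\,\mathrm{even}} q z^q = \frac{2z^2}{(1-z^2)^2}$ throughout. The only cosmetic difference is in the self-join/split term, where the paper pulls out $\frac{\partial}{\partial z_i} z_i$ directly via $[b]z^{b-1}b = \frac{\partial}{\partial z}(z\cdot[b]z^{b-1})$ rather than expanding $(p+q+r)$ and invoking $\sum r^2 z^r$; both computations close correctly.
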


\begin{proof}
The combinatorial recursion of~\cref{thm:propNgn} states that for $2g-2+n \geq 2$ and $b_1, b_2, \ldots, b_n \geq 0$, we have the following equation.
\begin{align*}
&\left(\sum_{i=1}^{n} b_i\right) \N_{g,n}(\bb _S) = \sum_{i < j} \sum_{\substack{p+q= b_i+b_j \\ q \text{ even}}} [p]q \, \N_{g,n-1}(p, \bb_{S\setminus \{i,j\}}) \\
&\qquad + \frac{1}{2} \sum_i \sum_{\substack{p+q+r=b_i \\ r \text{ even}}} [p] [q] r \Bigg[ \N_{g-1,n+1}(p,q, \bb_{S \setminus \{i\}}) + \sum_{\substack{g_1+g_2=g \\ I \sqcup J = S \setminus \{i\}}}^{\mathrm{stable}} \N_{g_1,|I|+1}(p,\bb_I) \, \N_{g_2,|J|+1}(q,\bb_J) \Bigg]
\end{align*}

Let us define the operators
\[
\mathcal{O} = \sum_{b_1,b_2, \ldots, b_n=0}^{\infty} [~ \cdot~ ] \, \prod_{i=1}^{n} \, [b_i] z_i^{b_i-1} \qquad \text{and} \qquad \mathcal{O}_J = \sum_{b_i=0: i \notin J}^{\infty} [~ \cdot~ ] \, \prod_{i \notin J} \, [b_i] z_i^{b_i-1}.
\]

The result arises from applying the operator $\mathcal O$ to both sides of the combinatorial recursion. The left side becomes
\begin{align*}
\sum_{b_1,b_2, \ldots, b_n=0}^{\infty} \left(\sum_{i=1}^{n} b_i\right) \N_{g,n}(\bb _S) \prod_{i=1}^{n} \, [b_i] z_i^{b_i-1} &= \sum_{i=1}^{n} \sum_{b_1,b_2, \ldots, b_n=0}^{\infty} \frac{\partial}{\partial z_i} z_i \bigg( \N_{g,n}(\bb _S) \prod_{i=1}^{n} \, [b_i] z_i^{b_i-1} \bigg) \\
&= \sum_{i=1}^{n} \frac{\partial}{\partial z_i} z_i W_{g,n}(\zz _S). \tag{$\ast$}
\end{align*}

Applying the operator $\mathcal{O}$ to the $(i,j)$th summand in the first term on the right side of the combinatorial recursion yields
\begin{align*}
& \sum_{b_1,b_2, \ldots, b_n=0}^{\infty} \sum_{\substack{p+q=b_i+b_j \\ q \text{ even}}} [p]q \, \N_{g,n-1}(p, \bb_{S\setminus\{i,j\}}) \, \prod_{i=1}^{n} \, [b_i] z_i^{b_i-1} \\
&\qquad = \mathcal{O}_{i,j} \sum_{b_i,b_j=0}^{\infty} \sum_{\substack{p+q=b_i+b_j \\ q \text{ even}}} [p]q \, \N_{g,n-1}(p, \bb_{S\setminus\{i,j\}}) \, [b_i] \, [b_j] z_i^{b_i-1} z_j^{b_j-1} \\
&\qquad = \mathcal{O}_{i,j} \sum_{\substack{p,q=0 \\ q \text{ even}}}^{\infty} [p] q \, \N_{g,n-1}(p, \bb_{S\setminus \{i,j\}}) \sum_{k=0}^{p+q} \, [k] \, [p+q-k] z_i^{k-1} z_j^{p+q-k-1} \\
&\qquad = \mathcal{O}_{i,j} \sum_{\substack{p,q=0 \\ q \text{ even}}}^{\infty} [p] q \, \N_{g,n-1}(p,\bb_{S\setminus \{i,j\}}) \bigg[ \frac{\partial}{\partial z_i} z_i^{p+q} z_j^{-1} + \frac{\partial}{\partial z_j} z_i^{-1} z_j^{p+q} \bigg] \\
& \qquad \quad + \mathcal{O}_{i,j} \sum_{\substack{p,q=0 \\ q \text{ even}}}^{\infty} [p] q \, \N_{g,n-1}(p,\bb_{S\setminus \{i,j\}}) \frac{\partial}{\partial z_i} \frac{\partial}{\partial z_j} \big( z_i^{p+q-1}z_j^{1} + z_i^{p+q-2}z_j^{2} + \cdots + z_i^{1} z_j^{p+q-1} \big). 
\end{align*}

Consider the first of the two terms in this last expression and use $\sum\limits_{q \text{ even}}q z^q = \frac{2z^2}{(1-z^2)^2}$ to obtain the following.
\begin{align*}
& \mathcal{O}_{i,j} \sum_{\substack{p,q=0 \\ q \text{ even}}}^{\infty} [p] q \, \N_{g,n-1}(p,\bb_{S\setminus \{i,j\}}) \bigg[ \frac{\partial}{\partial z_i} z_i^{p+q} z_j^{-1} + \frac{\partial}{\partial z_j} z_i^{-1} z_j^{p+q} \bigg] \\
&\qquad = \mathcal{O}_{i,j} \frac{\partial}{\partial z_i} z_j^{-1} z_i \sum_{\substack{q=0\\q\text{ even}}}^{\infty} q z_i^{q} \sum_{p=0}^{\infty} \, [p] \, \N_{g,n-1}(p,\bb_{S\setminus \{i,j\}}) \, z_i^{p-1} \\
& \qquad \quad + \mathcal{O}_{i,j} \frac{\partial}{\partial z_j} z_i^{-1} z_j \sum_{\substack{q=0\\q\text{ even}}}^{\infty} q z_j^{q} \sum_{p=0}^{\infty} \, [p] \, \N_{g,n-1}(p,\bb_{S\setminus \{i,j\}}) \, z_j^{p-1} \\
&\qquad = \frac{\partial}{\partial z_i} \bigg[ \frac{2}{z_j} \frac{z_i^3}{(1-z_i^2)^2} W_{g,n-1}(\zz_{S\setminus\{j\}}) \bigg] + \frac{\partial}{\partial z_j} \bigg[ \frac{2}{z_i} \frac{z_j^3}{(1-z_j^2)^2} W_{g,n-1}(\zz_{S\setminus\{i\}}) \bigg] \tag{$\ast$}
\end{align*}

Now consider the second term in a similar fashion to obtain the following.
\begin{align*}
& \mathcal{O}_{i,j}\sum_{\substack{p,q=0 \\ q \text{ even}}}^{\infty} [p] q \, \N_{g,n-1}(p,\bb_{S\setminus \{i,j\}}) \frac{\partial}{\partial z_i} \frac{\partial}{\partial z_j} \big( z_i^{p+q-1}z_j^{1} + z_i^{p+q-2}z_j^{2} + \cdots + z_i^{1} z_j^{p+q-1} \big) \\ 
&\qquad = \mathcal{O}_{i,j} \sum_{\substack{p,q=0 \\ q \text{ even}}}^{\infty} [p] q \, \N_{g,n-1}(p,\bb_{S\setminus \{i,j\}}) \frac{\partial}{\partial z_i} \frac{\partial}{\partial z_j} \frac{z_i^{p+q} z_j - z_i z_j^{p+q}}{z_i - z_j} \\
&\qquad = \mathcal{O}_{i,j} \frac{\partial}{\partial z_i} \frac{\partial}{\partial z_j} \bigg[ \frac{1}{z_i - z_j} \sum_{p,q=0}^{\infty} [p] q \, \N_{g,n-1}(p,\bb_{S\setminus \{i,j\}}) \big(z_i^{p+q} z_j - z_i z_j^{p+q} \big) \bigg] \\
&\qquad = 2 \frac{\partial}{\partial z_i} \frac{\partial}{\partial z_j} \bigg[ \frac{z_j}{z_i - z_j} \frac{z_i^3}{(1-z_i^2)^2} W_{g,n-1}(\zz_{S\setminus\{j\}}) - \frac{z_i}{z_i - z_j} \frac{z_j^3}{(1-z_j^2)^2} W_{g,n-1}(\zz_{S\setminus\{i\}}) \bigg] \tag{$\ast$}
\end{align*}

Applying the operator $\mathcal{O}$ to twice the $i$th summand in the second term on the right side of the combinatorial recursion yields
\begin{align*}
& \sum_{\substack{b_1, b_2, \ldots, b_n=0 \\ p+q+r=b_i \\ r \text{ even}}}^{\infty} [p][q]r \Bigg[\N_{g-1,n+1}(p,q,\bb_{S\setminus\{i\}}) + \sum_{\substack{g_1+g_2=g\\ I\sqcup J=S\setminus\{i\}}}^{\mathrm{stable}} \N_{g_1,|I|+1}(p,\bb_I) \, \N_{g_2,|J|+1}(q,\bb_J)\Bigg] \prod_{i=1}^{n} \, [b_i] z_i^{b_i-1} \\
&\quad = \mathcal{O}_i \sum_{\substack{b_i=0 \\ p+q+r=b_i \\ r \text{ even}}} [p][q]r \Bigg[\N_{g-1,n+1}(p,q,\bb_{S\setminus\{i\}})+ \sum_{\substack{g_1+g_2=g\\ I\sqcup J=S\setminus\{i\}}}^{\mathrm{stable}} \N_{g_1,|I|+1}(p,\bb_I) \, \N_{g_2,|J|+1}(q,\bb_J)\Bigg] [b_i] z_i^{b_i-1} \\
&\quad = \mathcal{O}_i \frac{\partial}{\partial z_i} z_i \sum_{\substack{p,q,r=0 \\ r \text{ even}}}^{\infty} [p][q]r \Bigg[\N_{g-1,n+1}(p,q,\bb_{S\setminus\{i\}})+ \sum_{\substack{g_1+g_2=g\\ I\sqcup J=S\setminus\{i\}}}^{\mathrm{stable}} \N_{g_1,|I|+1}(p,\bb_I) \, \N_{g_2,|J|+1}(q,\bb_J)\Bigg] z_i^{p+q+r-1} \\
&\quad = \frac{\partial}{\partial z_i} \frac{2z_i^4}{(1-z_i^2)^2} \Bigg[ W_{g-1,n+1}(z_i, z_i, \zz_{S\setminus\{i\}})+ \sum_{\substack{g_1+g_2=g\\ I\sqcup J=S\setminus\{i\}}}^{\mathrm{stable}} W_{g_1,|I|+1}(z_i, \zz_I) \, W_{g_2,|J|+1}(z_i, \zz_J) \Bigg] \tag{$\ast$}
\end{align*}

Finally, combine all of the contributions from the expressions marked by ($\ast$) to obtain the desired result.
\end{proof}

\subsection{Breaking the symmetry}

A feature of the topological recursion is that it produces symmetric meromorphic multidifferentials from a recursion that is manifestly asymmetric, with a special role played by the variable $z_1$. We break the symmetry in the recursion of~\cref{prop:genfun} by applying the operator 
\[
F(z_1) \mapsto F(z_1) - \frac{1}{z_1^2} F(\tfrac{1}{z_1})
\]
to every term appearing. In a precise sense, this amounts to taking the symmetric part with respect to the involution $s(z) = \frac{1}{z}$ appearing in the topological recursion, stated at the level of functions rather than differentials.

Recall that~\cref{eq:vectorspace} combined with~\cref{prop:vectorspace} assert that
\[
\Omega_{g,n}(z_1, z_2, \ldots, z_n) + \Omega_{g,n}(\tfrac{1}{z_1}, z_2, \ldots, z_n) = 0.
\]
At the level of generating functions, this translates to the property
\begin{equation} \label{eq:wsymmetry}
W_{g,n}(z_1, z_2, \ldots, z_n) - \frac{1}{z_1^2} W_{g,n}(\tfrac{1}{z_1}, z_2, \ldots, z_n) = 0,
\end{equation}
which will be useful in subsequent calculations.

\begin{proposition} \label{prop:asymmetricgf}
For $2g-2+n \geq 2$, we have the following equation, where $S = \{2, 3, \ldots, n\}$ and $\zz_I = (z_{i_1}, z_{i_2}, \ldots, z_{i_k})$ for $I = \{i_1, i_2, \ldots, i_k\}$.
\begin{align*}
&W_{g,n}(z_1, \zz _S) - \Res_{p=0} \frac{W_{g,n}(p, \zz _S) \, \dd p}{z_1} = \sum_{j = 2}^{n} \bigg( \frac{2}{z_1 z_j} + \frac{1}{(z_1-z_j)^2} + \frac{1}{(1 - z_1z_j)^2} \bigg) \frac{z_1^3}{(1-z_1^2)^2} W_{g,n-1}(z_1, \zz_{S \setminus \{j\}}) \\
&\qquad - \sum_{j=2}^n \frac{\partial}{\partial z_j} \bigg[ \bigg( \frac{1}{z_1-z_j} + \frac{z_j}{1 - z_1z_j} \bigg) \frac{z_j^3}{(1-z_j^2)^2} W_{g,n-1}(\zz _S) \bigg] \\
&\qquad + \frac{z_1^3}{(1-z_1^2)^2} \Bigg[ W_{g-1,n+1}(z_1, z_1, \zz_S) + \sum_{\substack{g_1+g_2=g \\ I \sqcup J = S}}^{\mathrm{stable}} W_{g_1, |I|+1} (z_1, \zz_I) \, W_{g_2,|J|+1}(z_1, \zz_J) \Bigg]
\end{align*}
\end{proposition}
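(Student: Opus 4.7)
The plan is to apply the involution-based operator $T : F(z_1) \mapsto F(z_1) - z_1^{-2} F(1/z_1)$ to both sides of the recursion of~\cref{prop:genfun}, producing an identity of $z_1$-derivatives that can then be integrated to recover~\cref{prop:asymmetricgf}. The operator $T$ extracts the part of a function that is antisymmetric under $s(z) = 1/z$, and the crucial intertwining relation
\[
T\bigl[\partial_{z_1} F(z_1)\bigr] = \partial_{z_1}\bigl[F(z_1) + F(1/z_1)\bigr],
\]
which follows from $\partial_{z_1}[F(1/z_1)] = -z_1^{-2} F'(1/z_1)$, guarantees that each $T$-image of a term from~\cref{prop:genfun} presents itself naturally as a $z_1$-derivative. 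Combined with the $W$-symmetry $W_{g,n}(1/z_1, \ldots) = z_1^2 W_{g,n}(z_1, \ldots)$ from~\cref{eq:wsymmetry}, this allows direct comparison with $z_1$ times the right side of~\cref{prop:asymmetricgf}.

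On the left side of~\cref{prop:genfun}, for $i \geq 2$ the operators $\partial_{z_i}$ and $T$ commute and $T[W_{g,n}] = 0$, so $T[\partial_{z_i}(z_i W_{g,n})]$ vanishes; for $i = 1$, the intertwining relation combined with the $W$-symmetry yields $T[\partial_{z_1}(z_1 W_{g,n})] = 2 \partial_{z_1}\bigl(z_1 W_{g,n}(z_1, \zz_S)\bigr)$. On the right side, I classify the $\sum_{i < j}$ terms by whether $1 \in \{i, j\}$: pairs with $i, j \geq 2$ have $z_1$ sitting only inside $W_{g,n-1}$ arguments, so $T$ annihilates them; pairs with $i = 1$ require direct computation. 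Using the substitutions $h(1/z_1) = h(z_1)/z_1^2$ with $h(z) = z^3/(1-z^2)^2$, $z_j/(1/z_1 - z_j) = z_1 z_j/(1-z_1 z_j)$, and $(1/z_1)/(1/z_1 - z_j) = 1/(1-z_1 z_j)$, one finds that the single-derivative piece $\partial_{z_1}[\frac{2}{z_j} h(z_1) W(z_1, \ldots)]$ doubles, the single-derivative piece $\partial_{z_j}[\frac{2}{z_1} h(z_j) W(z_j, \ldots)]$ vanishes because $T[1/z_1] = 0$, and the mixed-derivative piece produces, via $\partial_{z_j}[z_j/(z_1-z_j)] = z_1/(z_1-z_j)^2$ and $\partial_{z_j}[z_1 z_j/(1-z_1 z_j)] = z_1/(1-z_1 z_j)^2$, precisely the squared-denominator pieces and $\partial_{z_j}$-pieces appearing on the right side of~\cref{prop:asymmetricgf}. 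The self-interaction term $\partial_{z_1}\bigl[\tfrac{z_1^4}{(1-z_1^2)^2}\bigl(W_{g-1, n+1}(z_1, z_1, \zz_S) + \sum W\,W\bigr)\bigr]$ likewise doubles under $T$ via per-slot $W$-symmetry. Assembling everything, the $T$-image of the right side of~\cref{prop:genfun} equals $2\partial_{z_1}\bigl[z_1 R(z_1, \zz_S)\bigr]$, where $R$ denotes the right side of~\cref{prop:asymmetricgf}.

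Finally, integrate the identity $2\partial_{z_1}\bigl(z_1 W_{g,n}(z_1, \zz_S)\bigr) = 2\partial_{z_1}\bigl[z_1 R(z_1, \zz_S)\bigr]$ in $z_1$ and fix the integration constant by taking $z_1 \to 0$: the left-side limit is $\Res_{p=0} W_{g,n}(p, \zz_S) \, \dd p$, while each summand of $z_1 R(z_1, \zz_S)$ vanishes at $z_1 = 0$ due either to explicit factors of $z_1^4/(1-z_1^2)^2$, or to the $z_1$-multiplier annihilating the $1/(z_1-z_j)$- and $z_j/(1-z_1 z_j)$-type pieces once $z_1 = 0$. Dividing the resulting equation by $z_1$ yields~\cref{prop:asymmetricgf}. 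The main obstacle is the partial-fractions bookkeeping in the mixed-derivative ``C-term'' computation: one must verify that the $s$-symmetrization of the bracket $\tfrac{z_j}{z_1-z_j} h(z_1) W(z_1, \ldots) - \tfrac{z_1}{z_1-z_j} h(z_j) W(z_j, \ldots)$ produces, after $\partial_{z_1}\partial_{z_j}$, exactly the squared-denominator terms $1/(z_1-z_j)^2$ and $1/(1-z_1 z_j)^2$ together with the $\partial_{z_j}$-terms $1/(z_1-z_j) + z_j/(1-z_1 z_j)$ of~\cref{prop:asymmetricgf}, with any additional constants picked up along the way being annihilated by the outer $\partial_{z_1}$.
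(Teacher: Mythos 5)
Your proposal is correct and follows essentially the same route as the paper: apply the antisymmetrisation operator $F(z_1)\mapsto F(z_1)-z_1^{-2}F(1/z_1)$ to \cref{prop:genfun}, use the symmetry of \cref{eq:wsymmetry} to show the surviving terms double (your intertwining relation is just a tidy repackaging of the paper's $-z_1^{-2}\,\partial/\partial(1/z_1)$ computation), strip off $\partial_{z_1}z_1$, and fix the resulting $z_1$-independent constant using the at-worst-simple pole of $W_{g,n}$ at $z_1=0$ — your $z_1\to0$ limit is equivalent to the paper's residue-matching. You also correctly flag the one genuine subtlety, namely that the partial-fraction rewriting $\tfrac{z_1}{z_1-z_j}+\tfrac{1}{1-z_1z_j}=z_1\bigl(\tfrac{1}{z_1-z_j}+\tfrac{z_j}{1-z_1z_j}\bigr)+\text{const}$ is only valid modulo constants killed by the outer $\partial_{z_1}$.
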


\begin{proof}
As mentioned above, we apply the operator $F(z_1) \mapsto F(z_1) - \frac{1}{z_1^2} F(\tfrac{1}{z_1})$ to all terms appearing in the recursion of~\cref{prop:genfun}. The left side becomes
\begin{align*}
&\sum_{i=1}^n \frac{\partial}{\partial z_i} z_i W_{g,n}(z_1, \zz_S) - \frac{1}{z_1^2} \bigg[ \sum_{i=1}^n \frac{\partial}{\partial z_i} z_i W_{g,n}(z_1, \zz_S) \bigg]_{z_1 \mapsto \frac{1}{z_1}} \\
&\qquad = \frac{\partial}{\partial z_1} z_1 W_{g,n}(z_1, \zz_S) - \frac{1}{z_1^2} \frac{\partial}{\partial (\frac{1}{z_1})} \frac{1}{z_1} W_{g,n}(\tfrac{1}{z_1}, \zz_S) + \sum_{i=2}^n \frac{\partial}{\partial z_i} z_i \bigg[ W_{g,n}(z_1, \zz_S) - \frac{1}{z_1^2} W_{g,n}(\tfrac{1}{z_1}, \zz_S) \bigg] \\
&\qquad = 2 \frac{\partial}{\partial z_1} z_1 W_{g,n}(z_1, \zz_S). \tag{$\ast \ast$}
\end{align*}
Here, we have used the symmetry property of~\cref{eq:wsymmetry} to deduce that the summands with $2 \leq i \leq n$ are equal to 0 and to express $W_{g,n}(\tfrac{1}{z_1}, \zz_S)$ in terms of $W_{g,n}(z_1, \zz_S)$.

In the summation over $i < j$ on the right side, the symmetry property of~\cref{eq:wsymmetry} ensures that a non-zero contribution arises only for the summands with $i = 1$ and $j = 2, 3, \ldots, n$. For such a summand, the first line on the right side contributes
\begin{align*}
& \frac{\partial}{\partial z_1} \bigg[ \frac{2}{z_j} \frac{z_1^3}{(1-z_1^2)^2} W_{g,n-1}(\zz_{S\setminus\{j\}}) \bigg] - \frac{1}{z_1^2} \frac{\partial}{\partial (\frac{1}{z_1})} \bigg[ \frac{2}{z_j} \frac{\frac{1}{z_1^3}}{(1-\frac{1}{z_1^2})^2} W_{g,n-1}(\tfrac{1}{z_1}, \zz_{S\setminus\{1, j\}}) \bigg] \\
&+ \frac{\partial}{\partial z_j} \bigg[ \frac{1}{z_1} \frac{z_j^3}{(1-z_j^2)^2} W_{g,n-1}(\zz_S) \bigg] - \frac{1}{z_1^2} \frac{\partial}{\partial z_j} \bigg[ z_1 \frac{z_j^3}{(1-z_j^2)^2} W_{g,n-1}(\zz_S) \bigg] \\
&\qquad = 2 \frac{\partial}{\partial z_1} \bigg[ \frac{2}{z_j} \frac{z_1^3}{(1-z_1^2)^2} W_{g,n-1}(\zz_{S\setminus\{j\}}) \bigg]. \tag{$\ast \ast$}
\end{align*}

The second line on the right side contributes
\begin{align*}
& 2 \frac{\partial}{\partial z_1} \frac{\partial}{\partial z_j} \bigg[\frac{1}{z_1 - z_j} \frac{z_1^3 z_j}{(1-z_1^2)^2} W_{g,n-1}(z_1, \zz_{S\setminus\{j\}}) \bigg] - 2 \frac{1}{z_1^2} \frac{\partial}{\partial (\frac{1}{z_1})} \frac{\partial}{\partial z_j} \bigg[ \frac{1}{\frac{1}{z_1} - z_j} \frac{\frac{1}{z_1^3} z_j}{(1-\frac{1}{z_1^2})^2} W_{g,n-1}(\tfrac{1}{z_1}, \zz_{S\setminus\{j\}}) \bigg] \\
&- 2 \frac{\partial}{\partial z_1} \frac{\partial}{\partial z_j} \bigg[\frac{1}{z_1 - z_j} \frac{z_1 z_j^3}{(1-z_j^2)^2} W_{g,n-1}(\zz_S) \bigg] + 2 \frac{1}{z_1^2} \frac{\partial}{\partial (\frac{1}{z_1})} \frac{\partial}{\partial z_j} \bigg[\frac{1}{\frac{1}{z_1} - z_j} \frac{\frac{1}{z_1} z_j^3}{(1-z_j^2)^2} W_{g,n-1}(\zz_S) \bigg] \\
&\qquad = 2 \frac{\partial}{\partial z_1} \frac{\partial}{\partial z_j} \bigg[ \bigg( \frac{z_j}{z_1 - z_j} + \frac{z_1 z_j}{1 - z_1z_j} \bigg) \frac{z_1^3}{(1-z_1^2)^2} W_{g,n-1}(z_1, \zz_{S\setminus\{j\}}) \bigg] \\
&\qquad \quad - 2 \frac{\partial}{\partial z_1} \frac{\partial}{\partial z_j} \bigg[ \bigg( \frac{z_1}{z_1 - z_j} + \frac{1}{1 - z_1 z_j} \bigg) \frac{z_j^3}{(1-z_j^2)^2} W_{g,n-1}(\zz_S) \bigg]. \tag{$\ast \ast$}
\end{align*}

In the summation over $i$ on the right side, the symmetry property of~\cref{eq:wsymmetry} ensures that a non-zero contribution arises only for the summands with $i = 1$. So the third line on the right side contributes
\begin{align*}
& \frac{\partial}{\partial z_1} \frac{z_1^4}{(1-z_1^2)^2} \Bigg[ W_{g-1,n+1}(z_1, z_1, \zz_S) + \sum_{\substack{g_1+g_2=g\\ I\sqcup J=S}}^{\text{stable}} W_{g_1,|I|+1}(z_1, \zz_I) \, W_{g_2,|J|+1}(z_1, \zz_J) \Bigg] \\
&- \frac{1}{z_1^2} \frac{\partial}{\partial (\tfrac{1}{z_1})} \frac{\frac{1}{z_1^4}}{(1-\frac{1}{z_1^2})^2} \Bigg[ W_{g-1,n+1}(\tfrac{1}{z_1}, \tfrac{1}{z_1}, \zz_S) + \sum_{\substack{g_1+g_2=g\\ I\sqcup J=S}}^{\text{stable}} W_{g_1,|I|+1}(\tfrac{1}{z_1}, \zz_I) \, W_{g_2,|J|+1}(\tfrac{1}{z_1}, \zz_J) \Bigg] \\
&\qquad = 2 \frac{\partial}{\partial z_1} \frac{z_1^4}{(1-z_1^2)^2} \Bigg[ W_{g-1,n+1}(z_1, z_1, \zz_S) + \sum_{\substack{g_1+g_2=g\\ I\sqcup J=S}}^{\text{stable}} W_{g_1,|I|+1}(z_1, \zz_I) \, W_{g_2,|J|+1}(z_1, \zz_J) \Bigg]. \tag{$\ast \ast$}
\end{align*}

Gather together all of the terms marked by ($\ast \ast$) and perform some mild algebraic simplification to obtain the following.
\begin{align*}
& \frac{\partial}{\partial z_1} z_1 W_{g,n}(z_1, \zz_S) = \sum_{j=2}^n \frac{\partial}{\partial z_1} \bigg[ z_1 \bigg( \frac{2}{z_1 z_j} + \frac{1}{(z_1 - z_j)^2} + \frac{1}{(1 - z_1z_j)^2} \bigg) \frac{z_1^3}{(1-z_1^2)^2} W_{g,n-1}(z_1, \zz_{S\setminus\{j\}}) \bigg] \\
&\qquad - \sum_{j=2}^n \frac{\partial}{\partial z_1} \frac{\partial}{\partial z_j} \bigg[ z_1 \bigg( \frac{1}{z_1 - z_j} + \frac{z_j}{1 - z_1 z_j} \bigg) \frac{z_j^3}{(1-z_j^2)^2} W_{g,n-1}(\zz_S) \bigg] \\
&\qquad + \frac{\partial}{\partial z_1} \frac{z_1^4}{(1-z_1^2)^2} \Bigg[ W_{g-1,n+1}(z_1, z_1, \zz_S) + \sum_{\substack{g_1+g_2=g\\ I\sqcup J=S}}^{\text{stable}} W_{g_1,|I|+1}(z_1, \zz_I) \, W_{g_2,|J|+1}(z_1, \zz_J) \Bigg]
\end{align*}

One can remove the operator $\frac{\partial}{\partial z_1} z_1$ from every term to recover an equality of the following form, where $[\mathrm{correction}]$ is independent of $z_1$.
\begin{align*}
& W_{g,n}(z_1, \zz_S) + \frac{[\mathrm{correction}]}{z_1} = \sum_{j=2}^n \bigg[ \bigg( \frac{2}{z_1 z_j} + \frac{1}{(z_1 - z_j)^2} + \frac{1}{(1 - z_1z_j)^2} \bigg) \frac{z_1^3}{(1-z_1^2)^2} W_{g,n-1}(z_1, \zz_{S\setminus\{j\}}) \bigg] \\
&\qquad - \sum_{j=2}^n \frac{\partial}{\partial z_j} \bigg[ \bigg( \frac{1}{z_1 - z_j} + \frac{z_j}{1 - z_1 z_j} \bigg) \frac{z_j^3}{(1-z_j^2)^2} W_{g,n-1}(\zz_S) \bigg] \\
&\qquad + \frac{z_1^3}{(1-z_1^2)^2} \Bigg[ W_{g-1,n+1}(z_1, z_1, \zz_S) + \sum_{\substack{g_1+g_2=g\\ I\sqcup J=S}}^{\text{stable}} W_{g_1,|I|+1}(z_1, \zz_I) \, W_{g_2,|J|+1}(z_1, \zz_J) \Bigg]
\end{align*}

Finally, recall that $W_{g,n}(z_1, z_2, \ldots, z_n)$ has at worst a simple pole at $z_1 = 0$. It follows that the right side of this equation has no pole at $z_1=0$, so the correction term is given by
\[
[\mathrm{correction}] = - \Res_{p=0} W_{g,n}(p, \zz_S) \, \dd p,
\]
and this completes the proof.
\end{proof}

We remark that equating coefficients of $z_1, z_2, \ldots, z_n$ in the recursion of~\cref{prop:asymmetricgf} leads to the following asymmetric form of the combinatorial recursion for $\N_{g,n}(b_1, b_2, \ldots, b_n)$.

\begin{proposition}
For $2g-2+n \geq 2$ and $b_1, b_2, \ldots, b_n \geq 0$, we have the following equation, where $S = \{2, 3, \ldots, n\}$ and $\bb_I = (b_{i_1}, b_{i_2}, \ldots, b_{i_k})$ for $I = \{i_1, i_2, \ldots, i_k\}$.
\begin{align*}
2 b_1 \N_{g,n}(b_1, b _S)&= \sum_{j=2}^n \Bigg[ \sum_{p+q= b_1+b_j} [p]q \, \N_{g,n-1}(p, \bb_{S\setminus\{j\}}) + \sgn(b_1-b_j) \sum_{p+q= |b_1-b_j|} [p]q \, \N_{g,n-1}(\bb_{S\setminus\{j\}}) \Bigg] \\ 
&\quad + \sum_{p+q+r=b_1}[p][q]r\Bigg[ \N_{g-1,n+1}(p,q, \bb _S)+\sum_{\substack{g_1+g_2=g\\ I\sqcup J=S}}^{\mathrm{stable}} \N_{g_1,|I|+1}(p, \bb_I) \, \N_{g_2,|J|+1}(q, \bb_J)\Bigg]
\end{align*}
\end{proposition}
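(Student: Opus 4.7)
The plan is to extract the coefficient of $\prod_{i=1}^n z_i^{b_i-1}$ from both sides of the generating function identity in~\cref{prop:asymmetricgf} and then scale by $2/\prod_{i=2}^n [b_i]$.

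For the LHS, the first term $W_{g,n}(z_1, \zz_S)$ contributes $[b_1]\prod_{i\geq 2}[b_i]\,\N_{g,n}(b_1,\bb_S)$ to the coefficient of $\prod_i z_i^{b_i-1}$, while the residue term $\Res_{p=0} W_{g,n}(p,\zz_S)\,\dd p/z_1$ subtracts off precisely the $b_1=0$ portion. After the rescaling, the LHS becomes $2b_1\,\N_{g,n}(b_1,\bb_S)$ for $b_1\geq 1$; both sides vanish when $b_1=0$ by the parity property in~\cref{thm:propNgn}, so the claim is consistent in that case.

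For the RHS, I would process each term using the elementary series expansions
\[
\frac{z^3}{(1-z^2)^2} = \sum_{\substack{r \geq 0 \\ r \text{ even}}} \tfrac{r}{2}\,z^{r+1}, \qquad \frac{1}{(1-z_1 z_j)^2} = \sum_{k \geq 0}(k+1)\,z_1^k z_j^k, \qquad \frac{1}{(z_1-z_j)^2} = \sum_{k \geq 0}(k+1)\,z_j^k z_1^{-k-2},
\]
together with the analogous expansions of $\tfrac{1}{z_1-z_j}$ and $\tfrac{z_j}{1-z_1 z_j}$. For each $j\in S$, combining the three pieces $\tfrac{2}{z_1 z_j}$, $\tfrac{1}{(z_1-z_j)^2}$ and $\tfrac{1}{(1-z_1 z_j)^2}$ on the first line of~\cref{prop:asymmetricgf} with the $\partial/\partial z_j$ correction on the second line yields the bracketed sum over $p+q=b_1+b_j$ together with the signed sum over $p+q=|b_1-b_j|$: the $\tfrac{1}{(1-z_1 z_j)^2}$ piece produces the $(b_1+b_j)$-type sum, while the Laurent expansion of $\tfrac{1}{(z_1-z_j)^2}$ paired against the formal power series $W_{g,n-1}(z_1,\zz_{S\setminus\{j\}})$ yields the $|b_1-b_j|$-type sum with its accompanying $\sgn(b_1-b_j)$. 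The third line of~\cref{prop:asymmetricgf} extracts immediately into the $\sum_{p+q+r=b_1}[p][q]r\,[\,\cdots\,]$ piece, the factor of $r/2$ from $\tfrac{z_1^3}{(1-z_1^2)^2}$ being absorbed by the global scaling of $2$ (the parity restriction $r$ even is automatic from the vanishing of $\N_{g,n}$ on odd totals).

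The main obstacle is the careful bookkeeping of the two Laurent expansions of $\tfrac{1}{(z_1-z_j)^2}$ (one producing negative powers of $z_1$, the other negative powers of $z_j$), which is where the $\sgn(b_1-b_j)$ factor ultimately originates. One must verify that the $\tfrac{2}{z_1 z_j}$ contribution and the $\partial/\partial z_j$ correction together cancel the spurious negative-power terms at $z_1=0$ and $z_j=0$, so that the final extracted identity is a genuine polynomial relation among the $\N_{g,n-1}(p,\bb_{S\setminus\{j\}})$ with the sign and absolute-value structure as claimed.
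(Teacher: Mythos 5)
Your approach is the same as the paper's: the paper offers no details here beyond asserting that the identity follows by equating coefficients in~\cref{prop:asymmetricgf}, and your coefficient-extraction plan (with the rescaling by $2/\prod_{i\geq 2}[b_i]$ and the observation that the residue term only affects the $b_1=0$ coefficient) is sound and would go through. One correction to your bookkeeping, so that you are not derailed when executing it: the attribution of the two kernel pieces is reversed. With the expansions you wrote down, it is $\tfrac{1}{(z_1-z_j)^2}=\sum_{k\geq 0}(k+1)z_j^k z_1^{-k-2}$ acting on $\tfrac{z_1^3}{(1-z_1^2)^2}W_{g,n-1}(z_1,\zz_{S\setminus\{j\}})$ that produces the sum over $p+q=b_1+b_j$ (its residual negative powers of $z_1$ cancel against $-\partial_{z_j}\big[\tfrac{1}{z_1-z_j}\cdots\big]$ from the second line), whereas the signed sum over $p+q=|b_1-b_j|$ arises from pairing $\tfrac{1}{(1-z_1z_j)^2}$ on the first line, which contributes $+\,b_j\,C_{b_1-b_j}$, with $-\partial_{z_j}\big[\tfrac{z_j}{1-z_1z_j}\cdots\big]$ on the second line, which contributes $-\,b_j\,C_{b_j-b_1}$; since $C_0=0$ these combine into $b_j\,\sgn(b_1-b_j)\,C_{|b_1-b_j|}$ exactly as claimed.
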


\subsection{Proof of the main theorem}

We now have all of the pieces in place to prove our main result.

\begin{proof}[Proof of~\cref{thm:TR}]
Recall that we wish to prove that $\Omega_{g,n} = \omega_{g,n}$ for all $(g,n) \neq (0,1)$ or $(0,2)$, where the former is defined via the enumeration $\N_{g,n}(b_1, b_2, \ldots, b_n)$ and~\cref{eq:defOmega}, while the latter is defined via the topological recursion applied to the local spectral curve of~\cref{eq:spectralcurve}. We use an inductive approach with base cases $(g,n) = (0,3)$ and $(1,1)$. One may verify directly that $\Omega_{0,3} = \omega_{0,3}$ and $\Omega_{1,1} = \omega_{1,1}$ by comparing the calculations of~\cref{ex:omegabases} and~\cref{ex:Omegabases}. So the theorem is true whenever $2g-2+n = 1$. Now consider $(g,n)$ satisfying $2g-2+n \geq 2$ and assume the inductive hypothesis that $\Omega_{g',n'} = \omega_{g',n'}$ whenever $1 \leq 2g'-2+n' < 2g-2+n$ and $(g',n') \neq (0,1)$ or $(0,2)$.

We begin by rewriting~\cref{prop:asymmetricgf} in terms of multidifferentials by multiplying by $\dd z_1 \, \dd z_2 \, \cdots \, \dd z_n$.
\begin{align} \label{eq:eq2}
& \Omega_{g,n}(z_1,\zz_S) - \frac{\dd z_1}{z_1} \Res_{p=0} \Omega_{g,n}(p, \zz _S) = \sum_{j = 2}^{n} \bigg( \frac{2 \, \dd z_j}{z_1 z_j} + \frac{\dd z_j}{(z_1-z_j)^2} + \frac{\dd z_j}{(1 - z_1z_j)^2} \bigg) \frac{z_1^3}{(1-z_1^2)^2} \, \Omega_{g,n-1}(z_1, \zz_{S \setminus \{j\}}) \nonumber \\
&\qquad - \sum_{j=2}^n \frac{\partial}{\partial z_j} \bigg[ \bigg( \frac{1}{z_1-z_j} + \frac{z_j}{1 - z_1z_j} \bigg) \frac{z_j^3}{(1-z_j^2)^2} W_{g,n-1}(\zz _S) \bigg] \dd z_1 \, \dd z_2 \, \cdots \, \dd z_n \nonumber \\
&\qquad + \frac{z_1^3}{(1-z_1^2)^2} \frac{1}{\dd z_1} \Bigg[ \Omega_{g-1,n+1}(z_1, z_1, \zz_S) + \sum_{\substack{g_1+g_2=g \\ I \sqcup J = S}}^{\mathrm{stable}} \Omega_{g_1, |I|+1} (z_1, \zz_I) \, \Omega_{g_2,|J|+1}(z_1, \zz_J) \Bigg]
\end{align}

By~\cref{prop:vectorspace}, $\Omega_{g,n}(z_1, \zz_S)$ has at worst a simple pole at $z_1=0$ and poles at $z_1 =1$ and $z_1 = -1$. Hence, the left side of the previous equation only has poles at $z_1=1$ and $z_1 = -1$. Now use the fact that a rational differential is equal to the sum of its principal parts, each of which may be expressed by~\cref{eq:principalpart}, to obtain the following.
\[
\Omega_{g,n}(z_1, \zz_S) - \frac{\dd z_1}{z_1} \Res_{p=0} \Omega_{g,n}(p, \zz _S) = \sum_{\alpha = \pm 1} \Res_{z=\alpha} \frac{\dd z_1}{z_1-z} \bigg[ \Omega_{g,n}(z, \zz _S) - \frac{\dd z}{z} \Res_{p=0} \Omega_{g,n}(p, \zz_S) \bigg]
\]

Substituting~\cref{eq:eq2} into the right side of the previous equation yields the following.
\begin{align} \label{eq:eq3}
&\Omega_{g,n}(z_1, \zz _S) - \frac{\dd z_1}{z_1} \Res_{p=0} \Omega_{g,n}(p, \zz _S) \nonumber \\
&\qquad = \sum_{\alpha = \pm 1} \Res_{z=\alpha} \frac{1}{z_1-z} \frac{z^3}{(1-z^2)^2} \frac{\dd z_1}{\dd z} \Bigg[ \sum_{j = 2}^{n} \bigg( \frac{2 \, \dd z \, \dd z_j }{zz_j} + \frac{\dd z \, \dd z_j}{(z-z_j)^2} + \frac{\dd z \, \dd z_j}{(1 - zz_j)^2} \bigg) \omega_{g,n-1}(z, \zz_{S \setminus \{j\}}) \nonumber \\
&\qquad \quad + \omega_{g-1,n+1}(z,z, \zz_S) + \sum_{\substack{g_1+g_2=g \\ I \sqcup J = S}}^{\mathrm{stable}} \omega_{g_1, |I|+1} (z, \zz_I) \, \omega_{g_2,|J|+1}(z, \zz_J) \Bigg]
\end{align}
Since the entire second line on the right side of~\cref{eq:eq2} is evidently analytic at $z_1 = \alpha$ for all $\alpha \in \mathbb{C}$, we may omit it from this equation. Furthermore, we have invoked the induction hypothesis to replace each $\Omega_{g',n'}$ on the right side with $\omega_{g',n'}$.

Recalling the definition of $\omega_{0,2}$, we have
\[
\omega_{0,2}(z, z_2) = \frac{\dd z \, \dd z_2}{(z - z_2)^2} + \frac{\dd z \, \dd z_2}{z\, z_2} \quad \Rightarrow \quad \omega_{0,2}(z, z_2) - \omega_{0,2}(\tfrac{1}{z}, z_2) = \frac{2 \, \dd z \, \dd z_2}{z\, z_2} + \frac{\dd z \, \dd z_2}{(z - z_2)^2} + \frac{\dd z \, \dd z_2}{(1 - z z_2)^2}.
\]
Therefore,~\cref{eq:eq3} above can be written equivalently as follows, where we have also used the induction hypothesis and~\cref{prop:vectorspace} to deduce that $\omega_{g',n'}(z, \zz) = - \omega_{g',n'}(\tfrac{1}{z}, \zz)$ for various terms on the right side.
\begin{align*}
& \Omega_{g,n}(z_1, \zz _S) = \frac{\dd z_1}{z_1} \Res_{p=0} \Omega_{g,n}(p, \zz _S) + \sum_{\alpha = \pm 1} \Res_{z=\alpha} \frac{-1}{z_1-z} \frac{z^3}{(1-z^2)^2} \frac{\dd z_1}{\dd z} \Bigg[ \sum_{j=2}^n \omega_{0,2}(z, z_2) \, \omega_{g,n-1}(\tfrac{1}{z}, \zz_{S \setminus \{j\}}) \\
& \qquad + \sum_{j=2}^n \omega_{0,2}(\tfrac{1}{z}, z_2) \, \omega_{g,n-1}(z, \zz_{S \setminus \{j\}}) + \omega_{g-1,n+1}(z,\tfrac{1}{z}, \zz_S) + \sum_{\substack{g_1+g_2=g \\ I \sqcup J = S}}^{\mathrm{stable}} \omega_{g_1, |I|+1}(z, \zz_I) \, \omega_{g_2,|J|+1}(\tfrac{1}{z}, \zz_J) \Bigg],
\end{align*}

Now absorb the terms in the two summations over $j$ into the summation over $g_1+g_2$ and $I\sqcup J = S$. Recall that the symbol $\circ$ over the inner summation denotes that we exclude all terms with $\omega_{0,1}$. 
\begin{align} \label{eq:eq4}
\Omega_{g,n}(z_1, \zz _S) &= \frac{\dd z_1}{z_1} \Res_{p=0} \Omega_{g,n}(p, \zz _S) + \sum_{\alpha = \pm 1} \Res_{z=\alpha} \frac{-1}{z_1-z} \frac{z^3}{(1-z^2)^2} \frac{\dd z_1}{\dd z} \bigg[ \omega_{g-1,n+1}(z,\tfrac{1}{z}, \zz_S) \nonumber \\
& \quad + \sum_{\substack{g_1+g_2=g \\ I \sqcup J = S}}^\circ \omega_{g_1, |I|+1} (z, \zz_I) \, \omega_{g_2,|J|+1}(\tfrac{1}{z}, \zz_J) \bigg].
\end{align}

By construction we have $\Omega_{g,n}(p, \zz_S) \in V(p) \otimes V(z_2) \otimes \cdots \otimes V(z_n)$, so~\cref{lem:resatzero} asserts that
\[
\Res_{p=0} \Omega_{g,n}(p, \zz_S) = \sum_{\alpha = \pm 1} \Res_{z=\alpha} \Omega_{g,n}(z, \zz_S) \log(z).
\]
Multiply both sides of this equation by $\frac{\dd z_1}{z_1}$ and use~\cref{eq:eq2} to substitute for $\Omega_{g,n}(z, \zz_S)$ on the right side. Observing that the terms
\[
\frac{\dd z_1}{z_1} \Res_{p=0} \Omega_{g,n}(p, \zz _S) \qquad \text{and} \qquad \sum_{j=2}^n \frac{\partial}{\partial z_j} \bigg[ \bigg( \frac{1}{z_1-z_j} + \frac{z_j}{1 - z_1z_j} \bigg) \frac{z_j^3}{(1-z_j^2)^2} W_{g,n-1}(\zz _S) \bigg]
\]
are analytic at $z_1 = 1$ and $z_1 = -1$, we obtain the following.
\begin{align*}
& \frac{\dd z_1}{z_1} \Res_{p=0} \Omega_{g,n}(p, \zz_S) \\
&\qquad = \sum_{\alpha = \pm 1} \Res_{z=\alpha} \Omega_{g,n}(z, \zz_S) \frac{\log(z)}{z_1} \dd z_1 \\
&\qquad = \sum_{\alpha = \pm 1} \Res_{z=\alpha} \frac{\log(z)}{z_1} \frac{z^3}{(1-z^2)^2} \frac{\dd z_1}{\dd z} \Bigg[ \sum_{j = 2}^{n} \left( \frac{2 \, \dd z \, \dd z_j}{z z_j} + \frac{\dd z \, \dd z_j}{(z - z_j)^2} + \frac{\dd z \, \dd z_j}{(1 - z z_j)^2} \right) \Omega_{g,n-1}(z, \zz_{S\setminus \{j\}}) \\
 & \qquad \quad + \Omega_{g-1,n+1}(z_1, z_1, \zz _S) + \sum_{\substack{g_1+g_2=g \\ I \sqcup J = S}}^{\mathrm{stable}} \Omega_{g_1, |I|+1} (z_1, \zz_I) \, \Omega_{g_2,|J|+1}(z_1, \zz_J) \Bigg] \\
&\qquad = \sum_{\alpha = \pm 1} \Res_{z=\alpha} \frac{- \log(z)}{\dd z} \frac{z^3}{(1-z^2)^2} \frac{\dd z_1}{\dd z} \Bigg[ \omega_{g-1,n+1}(z,\tfrac{1}{z}, \zz _S) + \sum_{\substack{g_1+g_2=g \\ I \sqcup J = S}}^\circ \omega_{g_1, |I|+1} (z, \zz_I) \, \omega_{g_2,|J|+1}(\tfrac{1}{z}, \zz_J) \Bigg]
\end{align*}
Here, we have used the induction hypothesis and the same algebraic trickery that was used previously to deduce~\cref{eq:eq4} from~\cref{eq:eq3}.

Substituting the previous equation into~\cref{eq:eq4} results in
\begin{align*}
\Omega_{g,n}(z_1, \zz _S) &= \sum_{\alpha = \pm 1} \Res_{z=\alpha} K(z_1, z) \Bigg[ \omega_{g-1,n+1}(z,\tfrac{1}{z}, \zz _S) + \sum_{\substack{g_1+g_2=g \\ I \sqcup J = S}}^\circ \omega_{g_1, |I|+1} (z, \zz_I) \, \omega_{g_2,|J|+1}(\tfrac{1}{z}, \zz_J) \Bigg],
\end{align*}
where we have recognised the recursion kernel $K(z_1, z)$ calculated in~\cref{ex:omegabases}. The right side of this equation coincides precisely with the topological recursion as defined in~\cref{sec:TR}, so we have finally deduced that $\Omega_{g,n} = \omega_{g,n}$. By induction, we conclude that $\Omega_{g,n} = \omega_{g,n}$ for all $(g,n) \neq (0,1)$ or $(0,2)$.
\end{proof}

\section{Applications and remarks} \label{sec:applications}

\subsection{String and dilaton equations}

The correlation differentials produced by the topological recursion satisfy {\em string and dilaton equations}~\cite{eyn-ora07}.
\begin{align}
\sum_\alpha \Res_{z=\alpha} y(z) \, \omega_{g,n+1}(z, \zz_S) &= - \sum_{k=1}^n \dd z_k \frac{\partial}{\partial z_k} \bigg( \frac{\omega_{g,n}(\zz_S)}{\dd x(z_k)} \bigg) \label{eq:stringeq} \\
\sum_\alpha \Res_{z=\alpha} \Phi(z) \, \omega_{g,n+1}(z, \zz_S) &= (2g-2+n) \, \omega_{g,n}(\zz_S) \label{eq:dilaton}
\end{align}
Each left side is a summation over the zeroes $\alpha$ of $\dd x$, $S$ denotes the set $\{1, 2, \ldots, n\}$, and $\Phi(z)$ is any function satisfying $\dd \Phi(z) = y(z) \, \dd x(z)$. Although these were originally proven in the context of global topological recursion, we show below that they also hold for the spectral curve of~\cref{thm:TR}. In that case, we immediately obtain the relations of~\cref{cor:stringdilaton}, which are known due to the previous work of the second author and Norbury~\cite{do-nor11}.

\begin{proof}[Proof of~\cref{cor:stringdilaton}]
First, we deal with the string equation. Consider the left side of~\cref{eq:stringeq} and use the fact that the sum of the residues at the poles of $y(z) \, \omega_{g,n+1}(z, \zz_S)$ is 0. Multiplying $\omega_{g,n+1}(z, \zz_{S})$ by $y(z) = z$ removes the simple pole and introduces a pole at $z = \infty$. So using~\cref{prop:vectorspace}, we have
\begin{align*}
\sum_{\alpha=\pm 1} \Res_{z=\alpha} y(z) \, \omega_{g,n+1}(z, \zz_S) &= - \Res_{z=\infty} z \, \omega_{g,n+1}(z, \zz_S) = -\Res_{z=0} \frac{1}{z} \omega_{g,n+1} (\tfrac{1}{z}, \zz_S) = \Res_{z=0} \frac{1}{z} \omega_{g,n+1}(z, \zz_S) \\
&= \sum_{b_1, b_2, \ldots, b_n=0}^{\infty} \N_{g,n+1}(1, \bb_S) \, \prod_{i=1}^n \, [b_i] z_i^{b_i-1} \, \dd z_i.
\end{align*}

Next, consider the $k$th summand of the right side of~\cref{eq:stringeq}.
\begin{align*}
& - \dd z_k \frac{\partial}{\partial z_k} \bigg( \frac{\omega_{g,n}(\zz_S)}{\dd x(z_k)} \bigg) = \dd z_k \frac{\partial}{\partial z_k} \bigg( \frac{1}{\dd z_k} \frac{z_k^2}{1-z_k^2} \sum_{b_1, b_2, \ldots, b_n=0}^{\infty} \N_{g,n}(\bb_S) \, \prod_{i=1}^n \, [b_i] z_i^{b_i-1} \, \dd z_i \bigg) \\
&\qquad = \dd z_k \sum_{a=0}^{\infty} \sum_{m=1}^{\infty} \sum_{b_1, \ldots, \widehat{b}_k, \ldots, b_n=0}^{\infty} \N_{g,n}(a, \bb_{S \setminus \{k\}}) \, [a] (a+2m-1) z_i^{a+2m-2} \, \prod_{i\in S\setminus \{k\}} [b_i] z_i^{b_i-1} \, \dd z_i
\end{align*}

Hence, extracting the coefficient of $\prod_{i=1}^{n} [b_i] \, z_i^{b_i-1} \, \dd z_i$ from the two sides of~\cref{eq:stringeq} leads to the first relation of~\cref{cor:stringdilaton}.
\[
\N_{g,n+1}(1, b_1, b_2, \ldots, b_n) = \sum_{k=1}^{n} \sum_{a=0}^{b_k} [a] \, \N_{g,n}(a, b_1, \ldots, \widehat{b}_k, \ldots, b_n)
\]

Next, we deal with the dilaton equation, in which case we take $\Phi(z) = \tfrac{1}{2} z^{2} - \log(z)$. Consider the left side of~\cref{eq:dilaton} and use~\cref{lem:resatzero} to deal with the logarithmic term that arises.
\begin{align*}
\sum_{\alpha=\pm 1} \Res_{z=\alpha} \Phi (z) \, \omega_{g,n+1}(z, \zz_S) &= \sum_{\alpha=\pm 1} \Res_{z=\alpha} \frac{1}{2} z^{2} \omega_{g,n+1}(z, \zz_S) - \sum_{\alpha=\pm 1} \Res_{z=\alpha} \log(z) \, \omega_{g,n+1}(z, \zz_S) \\
&= -\Res_{z=\infty} \frac{1}{2} z^{2} \omega_{g,n+1}(z, \zz_S) - \Res_{z=0} \omega_{g,n+1}(z, \zz_S) \\
&= \Res_{z=0} \frac{1}{2 z^{2}}\omega_{g,n+1}(z, \zz_S)-\Res_{z=0} \omega_{g,n+1}(z, \zz_S) \\
&= \sum_{b_1, b_2, \ldots, b_n=0}^\infty \Big[ \N_{g,n+1}(2, \bb_S) - \N_{g,n+1}(0, \bb_S) \Big] \prod_{i=1}^n \, [b_i] z_i^{b_i-1} \, \dd z_i
 \end{align*}

So extracting the coefficient of of $\prod_{i=1}^n \, [b_i] z_i^{b_i-1} \, \dd z_i$ from the two sides of~\cref{eq:dilaton} leads to the second relation of~\cref{cor:stringdilaton}.
\[
\N_{g,n+1}(2, b_1, b_2, \ldots, b_n) - \N_{g,n+1}(0, b_1, b_2, \ldots, b_n) = (2g - 2 + n) \, \N_{g,n}(b_1, b_2, \ldots, b_n) \qedhere
\]
\end{proof}

\subsection{Quantum curves}

The notion of topological recursion is closely related to the notion of quantum curve~\cite{nor16}. Briefly speaking, one integrates the correlation differentials and stores them in the following so-called {\em wave function}.
\[
\psi(x, \h) = \exp \left[ \sum_{g=0}^\infty \sum_{n=1}^\infty \frac{\h^{2g-2+n}}{n!} \int^x \!\! \int^x \!\!\cdots\!\! \int^x \omega_{g,n}(z_1, z_2, \ldots, z_n) \right]
\]
The wave function satisfies differential equations of the form
\[
\hat{P}(\hat{x}, \hat{y}) \, \psi(x, \h) = 0,
\]
where $\hat{x} = x$, $\hat{y} = -\h \frac{\partial}{\partial x}$ and $\hat{P}$ is a non-commutative polynomial. It has been empirically observed and proved in a variety of contexts that there is natural choice of $\hat{P}(\hat{x}, \hat{y})$ whose semi-classical limit $P(x, y) = 0$ recovers the underlying spectral curve for the topological recursion. Of course, this phenomenon most naturally applies to the case of global spectral curves. As an example, it is known that the enumeration of lattice points in ${\mathcal M}_{g,n}$ is governed by the global rational spectral curve $x(z) = z + \frac{1}{z}$ and $y(z) = z$ and that the corresponding quantum curve is given by the following operator $\hat{P}(\hat{x}, \hat{y}) = \hat{y}^2 - \hat{x} \hat{y} + 1$~\cite{do-man14}.

It would be interesting to construct a natural wave function for the topological recursion of~\cref{thm:TR} and to find a quantum curve operator that annihilates it. Although the spectral curve is not global in the usual sense, it has the same underlying algebraic curve as for the enumeration of lattice points in ${\mathcal M}_{g,n}$. Thus, one might expect a different quantum curve operator to the one above, which still recovers $y^2 - xy + 1 = 0$ in the semi-classical limit. Examples of  this nature may help to shed further light on the still mysterious phenomenon of quantum curves.

\subsection{Where did the spectral curve come from?}

It is natural to ask where the spectral curve of~\cref{thm:TR} came from. In particular, it would be useful to be able to identify other problems that are governed by local topological recursion, perhaps with a modified $\omega_{0,2}$ as in the case here. Typically, one can speculate the form of a global spectral curve attached to an enumerative problem from the case $(g,n) = (0,1)$, given that $\omega_{0,1}(z_1) = -y(z_1) \, \dd x(z_1)$. The enumeration of lattice points in $\overline{\mathcal M}_{g,n}$ for $(g,n) = (0,1)$ and $(0,2)$ matches the enumeration of lattice points in ${\mathcal M}_{g,n}$, which indicates using the same $x(z)$ and $y(z)$ in the spectral curve data.\footnote{This statement is somewhat subtle, since the natural definitions would lead to $\N_{0,1}(b) = 0$ for $b > 0$. Instead, consider the enumeration of lattice points in ${\mathcal M}_{g,n}$ and $\overline{\mathcal M}_{g,n}$ as the enumeration of ordinary and stable ribbon graphs, in which all vertices have degree at least two. One can pass to the analogous problems in which this degree condition is relaxed using the pruning correspondence~\cite{do-nor18}. The resulting problems are stored in the same correlation differentials, but as coefficients in the expansion at $x = \infty$, rather than at $z = 0$. It is the alignment of these problems for $(g,n) = (0,1)$ and $(0,2)$ that suggests using the same $x(z)$ and $y(z)$ in the spectral curve data.}

The spectral curve of~\cref{thm:TR} arises from a modification to $\omega_{0,2}$ for the enumeration of lattice points in ${\mathcal M}_{g,n}$. One moves to the compactified version of the count by allowing nodes and there is a sense in which nodes correspond to $(0,2)$ information. For example, the stabilisation of a nodal curve contracts $(0,2)$ components --- that is, components with genus zero and two nodal points --- to nodes. Alternatively, consider the graphical interpretation of topological recursion, which expresses each correlation differential $\omega_{g,n}$ as a weighted sum over decorated graphs~\cite{dun-ora-sha-spi14,eyn-ora07}. For each such graph, the vertices are weighted by intersection numbers on $\overline{\mathcal M}_{g,n}$ and the edges by so-called {\em jumps}, which are essentially the coefficients of $\omega_{0,2}$. These decorated graphs bear a close relation to the graphs arising from the stratification of $\overline{\mathcal M}_{g,n}$, so that edges correspond to nodes. So again, we see that $\omega_{0,2}$ controls nodal behaviour and it should come as less of a surprise that the enumeration of lattice points in $\overline{\mathcal M}_{g,n}$ requires a modification to $\omega_{0,2}$. That the extra contribution to $\omega_{0,2}$ is of the form $\frac{\dd z_1 \, \dd z_2}{z_1 z_2}$ corresponds to the fact that we should take $\N_{0,2}(0,0) = 1$.

It would be interesting to take standard enumerative problems governed by global topological recursion --- such as the psi-class intersection numbers on ${\mathcal M}_{g,n}$, simple Hurwitz numbers and the Gromov--Witten theory of $\mathbb{CP}^1$ --- and consider the effect of a modification to $\omega_{0,2}$ on the associated correlation differentials.

\appendix

\section{Data} \label{app:data}

\cref{thm:propNgn} asserts that $\N_{g,n}(b_1, b_2, \ldots, b_n)$ is a symmetric quasi-polynomial that is non-zero only when $b_1 + b_2 + \cdots + b_n$ is even. Hence, $\N_{g,n}(b_1, b_2, \ldots, b_n)$ can be described by the underlying polynomials $\N_{g,n}^{(k)}(b_1, b_2, \ldots, b_n)$ that determine it in the case $b_1, b_2, \ldots, b_k$ are odd and $b_{k+1}, b_{k+2}, \ldots, b_n$ are even, where we may restrict to $k$ even. The following table is replicated from the literature~\cite{do-nor11} and stores this information for some small values of $g$ and~$n$. We remind the reader that the data lends strong support to~\cref{con:positivity}, which speculates that the coefficients of such polynomials are always non-negative.

\begin{center}
\begin{tabularx}{\textwidth}{cccX} \toprule
$g$ & $n$ & $k$ & $\N_{g,n}^{(k)}(b_1, b_2, \ldots, b_n)$ \\ \midrule
0 & 3 & 0 & 1 \\
0 & 3 & 2 & 1 \\
1 & 1 & 0 & $\frac{1}{48} (b_1^2 + 20)$ \\
0 & 4 & 0 & $\frac{1}{4}(b_1^2 + b_2^2 + b_3^2 + b_4^2 + 8)$ \\
0 & 4 & 2 & $\frac{1}{4}(b_1^2 + b_2^2 + b_3^2 + b_4^2 + 2)$ \\
0 & 4 & 4 & $\frac{1}{4}(b_1^2 + b_2^2 + b_3^2 + b_4^2 + 8)$ \\
1 & 2 & 0 & $\frac{1}{384}(b_1^4 + b_2^4 + 2b_1^2b_2^2 + 36b_1^2 + 36b_2^2 + 192)$ \\
1 & 2 & 2 & $\frac{1}{384}(b_1^4 + b_2^4 + 2b_1^2b_2^2 + 36b_1^2 + 36b_2^2 + 84)$ \\
0 & 5 & 0 & $\frac{1}{32} \sum b_i^4 + \frac{1}{8} \sum b_i^2 b_j^2 + \frac{7}{8} \sum b_i^2 + 7$ \\
0 & 5 & 2 & $\frac{1}{32} \sum b_i^4 + \frac{1}{8} \sum b_i^2 b_j^2 + \frac{5}{16}(b_1^2+b_2^2) + \frac{1}{8}(b_3^2+b_4^2+b_5^2) + \frac{19}{16}$ \\
0 & 5 & 4 & $\frac{1}{32} \sum b_i^4 + \frac{1}{8} \sum b_i^2 b_j^2 + \frac{5}{16} (b_1^2+b_2^2+b_3^2+b_4^2) + \frac{7}{8}b_5^2 + \frac{7}{8}$ \\
1 & 3 & 0 & $\frac{1}{4608} \sum b_i^6 + \frac{1}{768} \sum b_i^4 b_j^2 + \frac{1}{384} b_1^2b_2^2b_3^2 + \frac{13}{1152} \sum b_i^4 + \frac{1}{24} \sum b_i^2 b_j^2 + \frac{29}{144} \sum b_i^2 + \frac{17}{12}$ \\
1 & 3 & 2 & $\frac{1}{4608} \sum b_i^6 + \frac{1}{768} \sum b_i^4 b_j^2 + \frac{1}{384} b_1^2b_2^2b_3^2 + \frac{43}{4608} \sum b_i^4 + \frac{1}{24} \sum b_i^2 b_j^2 + \frac{277}{4608} \sum b_i^2 + \frac{1}{512} b_3^4 + \frac{1}{1536} b_3^2 + \frac{81}{256}$ \\
2 & 1 & 0 & $\frac{1}{1769472}b_1^8 + \frac{3}{40960}b_1^6 + \frac{133}{61440} b_1^4 + \frac{1087}{34560} b_1^2 + \frac{247}{1440}$ \\
0 & 6 & 0 & $\frac{1}{384} \sum b_i^6 + \frac{3}{28} \sum b_i^4 b_j^2 + \frac{3}{32} \sum b_i^2 b_j^2 b_k^2 + \frac{1}{6} \sum b_i^4 + \frac{9}{6} \sum b_i^2 b_j^2 + \frac{109}{24} b_i^2 + 34$ \\ \bottomrule
\end{tabularx}
\end{center}

\bibliographystyle{plain}
\bibliography{lattice-points-Mgnbar-TR.bib}

\end{document}